\newdimen\bibspace
\renewenvironment{thebibliography}[1]{%
 \section*{\refname 
       \@mkboth{\MakeUppercase\refname}{\MakeUppercase\refname}}%
     \list{\@biblabel{\@arabic\c@enumiv}}%
          {\settowidth\labelwidth{\@biblabel{#1}}%
           \leftmargin\labelwidth
           \advance\leftmargin\labelsep
           \itemsep\bibspace
           \parsep\z@skip     %
           \@openbib@code
           \usecounter{enumiv}%
           \let\p@enumiv\@empty
           \renewcommand\theenumiv{\@arabic\c@enumiv}}%
     \sloppy\clubpenalty4000\widowpenalty4000%
     \sfcode`\.\@m}
    {\def\@noitemerr
      {\@latex@warning{Empty `thebibliography' environment}}%
     \endlist}
\newtheorem{thm}{Theorem}[section]
\newtheorem{lem}[thm]{Lemma}
\newtheorem{prop}[thm]{Proposition}
\newtheorem{cor}[thm]{Corollary}
\def\Xint#1{\mathchoice
  {\XXint\displaystyle\textstyle{#1}}%
  {\XXint\textstyle\scriptstyle{#1}}%
  {\XXint\scriptstyle\scriptscriptstyle{#1}}%
  {\XXint\scriptscriptstyle\scriptscriptstyle{#1}}%
  \!\int}
\def\XXint#1#2#3{{\setbox0=\hbox{$#1{#2#3}{\int}$}
  \vcenter{\hbox{$#2#3$}}\kern-.5\wd0}}
\def\dashint{\Xint-}
                \newcommand{\lda}{\lambda}
\newcommand{\va}{\varepsilon}           \newcommand{\ud}{\mathrm{d}}
\newcommand{\be}{\begin{equation}}      \newcommand{\ee}{\end{equation}}
\newcommand{\R}{\mathbb{R}}
\begin{document}
\title{\bf\Large The local behavior of positive solutions  for higher order equation with isolated singularity }
\vspace{0.25cm}
\author{\medskip {{ Yimei Li }
}\\
\small Department of Mathematic, School of Sciences,  Beijing Jiaotong University, \\
\small Beijing 100044, People's Republic of China}
\date{}
\maketitle

\vspace{-0.8cm}

{\bf Abstract}
We use blow up analysis for local integral equations to provide a blow up rates of solutions of higher order Hardy-H\'{e}non equation in a bounded domain with an isolated singularity, and show the asymptotic radial symmetry of the solutions near the singularity. This work generalizes the  correspondence results  of Jin-Xiong \cite{JX} on higher order conformally invariant equations with an isolated singularity.

{\bf Keywords} higher order Hardy-H\'{e}non equation $\cdot$ isolated singularity $\cdot$  blow up rate estimate $\cdot$ asymptotically radially symmetric

{\bf Mathematics Subject Classification} \ $35{\rm G}20 \cdot 35{\rm B}44 \cdot 45{\rm M}05$

\section{Introduction}
This article aims to study  the local  behaviors of positive  solutions for the higher order  Hardy-H\'{e}non equation
\be\label{Weq:maineq1}
(-\Delta)^\sigma u=|x|^{\tau}u^p \quad\quad\mbox{in }\ \ B_1 \backslash\{0\},
\ee
where   $1\leq\sigma<\frac{n}{2}$ is  an integer,  $\tau>-2\sigma$, $p>1$ and the punctured unit ball $B_1\backslash\{0\}\subset\R^n$,  $n\ge 2$.

In the special case of $\sigma=1$, the local behavior of the positive solutions for \eqref{Weq:maineq1} with isolated singularity has been very well understood.  For $\tau>-2$, $1<p\leq\frac{n+2}{n-2}$, the blow up rate of the solution 
\[
u(x)\leq C|x|^{-\frac{2+\tau}{p-1}},  \ \ |\nabla u(x)|\leq C|x|^{-\frac{p+1+\tau}{p-1}}\quad \mbox{near }\ x=0,
\]
is obtained by a number of authors, where $\nabla u$ denotes the gradient of $u$ and $C$ stands for the different positive constants.  For more precise estimates and details, we refer the interested reader to  \cite{AA, A, GS, KMPS, Lions,  N,PSP, ZZ}. In the classical paper \cite{CGS}, Caffarelli-Gidas-Spruck established the asymptotic behavior for local positive solutions of \eqref{Weq:maineq1}, 
\[
u(x)=\bar u(|x|)(1+O(|x|))\quad\quad\mbox{as }\ x\to 0,
\]
where $\tau=0$, $\frac{n}{n-2}\leq p \leq \frac{n+2}{n-2}$ and $\bar u(|x|):=\dashint_{\mathbb{S}^n}u(|x|\theta)\ud \theta$ is the spherical average of $u$. Li  \cite{L} improved  their results for  $\tau\leq 0$, $1<p\leq \frac{n+2+\tau}{n-2}$,  and simplified the proofs. For the fractional case $0<\sigma<1$,  Caffarelli-Jin-Sire-Xiong \cite{CJSX} studied the sharp blow up rate, asymptotically radially symmetric and removability of the positive solution for the  fractional Yamabe equation with an isolated singularity
\[
(-\Delta)^\sigma u=u^{\frac{n+2\sigma}{n-2\sigma}} \quad\quad\mbox{in }\ \ B_1 \backslash\{0\}.
\]
Motivated by the work of the above, we  have studied the fractional Hardy-H\'{e}non equations   in our previous work \cite{BL}, and not only derived that there exists a positive constant $C$ such that  the blow up rates
\[
u(x)\leq C|x|^{-\frac{2\sigma+\tau}{p-1}},  \ \ |\nabla u(x)|\leq C|x|^{-\frac{2\sigma+\tau+p-1}{p-1}}\quad \mbox{near }\ x=0,
\]
for $\tau>-2\sigma$, $1<p<\frac{n+2\sigma}{n-2\sigma}$, but also obtained the asymptotically radially symmetric
\[
u(x)=\bar u(|x|)(1+O(|x|))\quad \mbox{as }\ x\to 0,
\]
for $-2\sigma<\tau\leq0$, $\frac{n+\tau}{n-2\sigma}<p\leq \frac{n+2\sigma+2\tau}{n-2\sigma}$, which is consistent with the classic case $\sigma=1$.

Recently, by using blow up analysis Jin-Xiong \cite{JX}   proved sharp blow up rates of the positive solutions of higher order conformally invariant equations  with an isolated singularity
\[
(-\Delta)^\sigma u=u^{\frac{n+2\sigma}{n-2\sigma}} \quad\quad\mbox{in }\ \ B_1 \backslash\{0\},
\]
where $1\leq\sigma<\frac{n}{2}$ is  an integer, and showed the asymptotic radial symmetry of
the solutions near the singularity. In detail, they proved that  there exists a positive constant $C$ such that 
\[
u(x)\leq C|x|^{-\frac{n-2\sigma}{2}}\quad \mbox{near }\ x=0,
\]
and
\[
u(x)=\bar u(|x|)(1+O(|x|))\quad \mbox{as }\ x\to 0.
\]
This is an extension of the celebrated theorem of Caffarelli-Gidas-Spruck \cite{CGS} for the second order Yamabe equation
and Caffarelli-Jin-Sire-Xiong \cite{CJSX} for the fractional Yamabe equation with isolated singularity to higher order equations.

Inspired by the above work,  we are interested in the higher order Hardy-H\'{e}non  equation \eqref{Weq:maineq1}, where $1\leq\sigma<\frac{n}{2}$ is  an integer, in a bounded domain with an isolated singularity in this paper. Our  results provide a  blow up rate estimate near the isolated singularity and show that the solution of \eqref{Weq:maineq1} is asymptotically radially symmetric near the isolated singularity, which is consistent with the case $0<\sigma\leq 1$.
\begin{thm}\label{Wthm:a}Suppose that $1\leq\sigma<\frac{n}{2}$ is  an integer, and $u\in C^{2\sigma}(B_1 \backslash\{0\})\cap L^{\frac{n}{n-2\sigma}}(B_{1})$ is a positive solution of \eqref{Weq:maineq1}.

(i) If  $-2\sigma<\tau$, $\frac{n+\tau}{n-2\sigma}<p<\frac{n+2\sigma}{n-2\sigma}$ and
\be\label{fuhao}
(-\Delta)^m u\geq 0\quad\quad\mbox{\rm{in} }\ \ B_1 \backslash\{0\},\quad m=1,2,\cdots,\sigma-1,
\ee
then there exists a positive constant $C=C(n,\sigma, \tau, p,)$ such that 
\[
u(x)\leq C|x|^{-\frac{2\sigma+\tau}{p-1}},\ \ \ |\nabla u(x)|\leq  C|x|^{-\frac{2\sigma+\tau+p-1}{p-1}}\quad\text{\rm{near} } \ x=0.
\]

(ii) If $-2\sigma<\tau\leq0$, $\frac{n+\tau}{n-2\sigma}<p\leq\frac{n+2\sigma+2\tau}{n-2\sigma}$ and the solution satisfies \eqref{fuhao}, then
\[
u(x)=\bar u(|x|)(1+O(|x|))\quad\quad{\rm{as}}\ x\to 0,
\]
where $\bar u(|x|):=\dashint_{\mathbb{S}^n}u(|x|\theta)\ud \theta$ is the spherical average of $u$.
\end{thm}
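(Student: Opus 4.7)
The plan is to mirror the strategy that Jin--Xiong used in the conformally invariant case, with the adjustments needed to accommodate the Hardy--H\'enon weight $|x|^\tau$. The starting observation is that the assumption \eqref{fuhao} together with $u\in L^{n/(n-2\sigma)}(B_1)$ should let us rewrite \eqref{Weq:maineq1} as an integral equation
\[
u(x)=c_{n,\sigma}\int_{B_{1/2}}\frac{|y|^\tau u(y)^p}{|x-y|^{n-2\sigma}}\,\ud y + h(x), \qquad x\in B_{1/2}\setminus\{0\},
\]
where $h$ is $C^{2\sigma}$ in a neighbourhood of the origin. The sign condition forces the repeated Riesz-potential representation of $(-\Delta)^{-1}$ to be iterable (each lower-order Laplacian is a nonnegative measure) and the integrability hypothesis rules out a ``fundamental solution'' contribution at $0$, in the spirit of the B\^ocher theorem used in \cite{CGS}. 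Once this reduction is in hand, the absence of a maximum principle for $(-\Delta)^\sigma$ is no longer an obstacle, because the kernel $|x-y|^{-(n-2\sigma)}$ is explicit and positive.

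For part (i) I would proceed by contradiction using the Pol\'a\v{c}ik--Quittner--Souplet doubling lemma applied to $M(x):=u(x)^{(p-1)/(2\sigma+\tau)}$. If $|x_k|^{(2\sigma+\tau)/(p-1)}u(x_k)\to\infty$, doubling furnishes centres $\tilde x_k\to 0$ and scales $\lambda_k:=u(\tilde x_k)^{-(p-1)/(2\sigma+\tau)}\to 0$ with $M$ controlled on a large ball in the rescaled variable. Setting
\[
v_k(y) := \lambda_k^{(2\sigma+\tau)/(p-1)}\,u\bigl(\tilde x_k + \lambda_k y\bigr),
\]
two alternatives arise depending on whether $|\tilde x_k|/\lambda_k$ stays bounded or diverges. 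In the first case $v_k$ converges (after passing to a subsequence) to a positive integral solution of $(-\Delta)^\sigma v=|y+y_\infty|^\tau v^p$ on $\mathbb R^n\setminus\{-y_\infty\}$; in the second, the weight homogenizes and one obtains $(-\Delta)^\sigma v=v^p$ on $\mathbb R^n$. Both entire equations admit no positive solution in the subcritical range $p<(n+2\sigma)/(n-2\sigma)$ with $p>(n+\tau)/(n-2\sigma)$, by the Liouville theorems of Chen--Li--Ou and their weighted variants for integral equations, which contradicts $v(0)=1$. The gradient bound follows by differentiating the integral representation once the pointwise bound is established.

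For part (ii) the sharp growth rate from (i) is what makes a moving-spheres argument feasible on the integral equation. For $x_0\ne 0$ small and $0<\lambda<|x_0|$, I would introduce the Kelvin transform
\[
u_{x_0,\lambda}(y):=\Bigl(\frac{\lambda}{|y-x_0|}\Bigr)^{n-2\sigma}u\!\left(x_0+\frac{\lambda^2(y-x_0)}{|y-x_0|^2}\right)
\]
and study $w_\lambda:=u-u_{x_0,\lambda}$. The integral equation together with the bound from (i) guarantees that for sufficiently small $\lambda$ one has $w_\lambda\ge 0$ outside $B_\lambda(x_0)$, so the process can be started. The exponent condition $p\le(n+2\sigma+2\tau)/(n-2\sigma)$ is exactly what makes the weighted integral kernel $|y|^\tau$ transform favourably under the Kelvin inversion, so that the usual ``moving in'' step in the integral formulation produces a strictly positive integral that forces one to be able to move $\lambda$ all the way up to a critical value $\bar\lambda(x_0)\ge|x_0|/2$. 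Combining the resulting comparison inequality with its reflected analogue and averaging over directions $\theta\in\mathbb S^{n-1}$ yields $u(x)=\bar u(|x|)(1+O(|x|))$ near $0$.

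The principal obstacle in both parts is that the classical tools for second-order equations --- maximum principle, Harnack inequality, standard comparison --- are unavailable for $(-\Delta)^\sigma$ with integer $\sigma\ge 2$. The sign condition \eqref{fuhao} is precisely the device that converts the problem into an integral one with a positive kernel and restores these tools. The two places where care is really required are, first, rigorously justifying the integral representation including the claim that no singular polyharmonic ``tail'' is hidden at the origin (this is where $u\in L^{n/(n-2\sigma)}$ enters), and second, establishing the weighted Liouville theorem in the full subcritical range required here; these are the steps I expect to occupy the bulk of the technical work, since the doubling and moving-spheres machinery itself is by now standard once the integral formulation is available.
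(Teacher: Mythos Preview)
Your plan is essentially the paper's strategy: reduce \eqref{Weq:maineq1} to an integral equation via \eqref{fuhao}, the $L^{n/(n-2\sigma)}$ hypothesis, and a B\^ocher-type argument, then apply doubling plus a Liouville theorem for (i) and moving spheres on the integral equation for (ii). The structure matches.

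There is one technical difference in part (i) worth flagging. You apply doubling directly in the punctured ball and then split into two cases according to whether $|\tilde x_k|/\lambda_k$ is bounded. The paper instead first fixes $x_0\in B_{1/2}\setminus\{0\}$, sets $R=\tfrac12|x_0|$, and rescales $w(y)=R^{(2\sigma+\tau)/(p-1)}u(x_0+Ry)$; this converts the singular weight $|x|^\tau$ into a smooth coefficient $c(y)=|y+x_0/R|^\tau$ bounded above and below on $\overline{B_1}$. Doubling is then run in this regular setting, and the blow-up limit has \emph{constant} coefficient, so only the unweighted Liouville theorem for the Riesz-potential equation is needed. Your ``bounded ratio'' case would instead land on a Hardy--H\'enon equation on $\mathbb R^n\setminus\{-y_\infty\}$, for which the Liouville statement you invoke is not quite off-the-shelf. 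In fact that case never arises if you include $\{0\}$ in the boundary set $\Gamma$ of the doubling lemma (since then $|\tilde x_k|>2k\lambda_k$), so your dichotomy is avoidable; the paper's rescale-first approach sidesteps the issue entirely. Either way, the ``weighted Liouville'' you list as a principal obstacle is not actually required.
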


The main idea of our approach  is to carry out blow up analysis  to get the blow up rate estimate near the isolated singularity, and by the method of moving spheres to study the asymptotically radially symmetric as in Caffarelli-Jin-Sire-Xiong \cite{CJSX}  for the fractional Yamabe equation $0<\sigma<1$.  The method of moving spheres (see \cite{Li06, LLin, LZhang, LZhu}) has become a very powerful tool in the study of nonlinear elliptic equations, i.e. the method of moving planes together with the conformal invariance, which fully exploits the conformal invariance of the problem. It is known that one of the conformal invariance, i.e. the Kelvin transform of $u$ defined as
\[
u_{x,\lda }(y):= \left(\frac{\lda }{|y-x|}\right)^{n-2\sigma}u\left(x+\frac{\lda^2(y-x)}{|y-x|^2}\right)\quad\quad\mbox{in }\ \ \R^{n},
\]
with $\lda>0$ and $x\in \R^n$, plays an important part in our proof. However, in our local situation \eqref{Weq:maineq1}, the sign conditions \eqref{fuhao} may change when performing the Kelvin transforms. Inspired by a unified approach
to solve the Nirenberg problem and its generalizations  by the authors  Jin-Li-Xiong in \cite{JLX}, we shall make use of integral representations. In details, we first  prove $|x|^{\tau}u^p\in L^1(B_1)$ under the assumptions of Theorem \ref{Wthm:a}, and then we can rewrite the differential equation \eqref{Weq:maineq1} into the integral equation involving the Riesz potential
\[
u(x)=\int_{B_1}\frac{|y|^{\tau}u^p(y)}{|x-y|^{n-2\sigma}}dy+h(x)\quad\quad\mbox{in }\ \ B_1 \backslash\{0\},
\]
where  $h\in C^1(B_1)$ is a positive function. Thus, the sign conditions \eqref{fuhao} will ensure the maximum principle and are essential for applying the moving spheres method. As a result, we just need to study  the integral equation.

This paper is organized as follows. In Section \ref{sec:follow}, we shall show that \eqref{Weq:maineq1} can be written as the form of \eqref{i1}, and then give some results about the integral equation, which implies that Theorem \ref{Wthm:a} follows from these results. In Section \ref{sec:bd}, we prove the upper bound near the isolated singularity for the solution of \eqref{i1}, and   the asymptotic radial symmetry will be obtained in Section \ref{sec:asymptotical}.

\section{Proof of the main results} \label{sec:follow}
Before that we suppose $0<\sigma<\frac{n}{2}$  is a real number, $-2\sigma<\tau$, $p>1$, $u\in C(B_1\backslash\{0\})$,  $|x|^{\tau}u^p(x)\in L^1(B_1)$, and we consider the integral equation involving the Riesz potential
\be\label{i1}
u(x)=\int_{B_1}\frac{|y|^{\tau}u^p(y)}{|x-y|^{n-2\sigma}}dy+h(x)\quad\quad\mbox{in }\ \ B_1 \backslash\{0\},
\ee
where $h\in C^1(\overline{B_1})$ is a positive function, otherwise we  consider the equation in a smaller ball. About the integral equation \eqref{i1}, we shall first show some results,  which will  recover our previous work \cite{BL} for the fractional Yamabe equation $0<\sigma<1$,  and the proof will be given later in Section \ref{sec:bd} and Section \ref{sec:asymptotical}.  Now we first introduce the upper bound of the positive solution near the singularity.
\begin{thm}\label{i2}
Suppose that  $0<\sigma<\frac{n}{2}$  is a real number, $-2\sigma<\tau$, $1<p<\frac{n+2\sigma}{n-2\sigma}$, if $u\in C(B_1\backslash\{0\})$ is a positive solution of \eqref{i1} and $|x|^{\tau}u^p(x)\in L^1(B_1)$, then there exists a positive constant $C=C(n,\sigma, \tau, p)$ such that
\be\label{Weq:low and up bound}
u(x)\leq C|x|^{-\frac{2\sigma+\tau}{p-1}},\ \ \ |\nabla u(x)|\leq  C|x|^{-\frac{2\sigma+\tau+p-1}{p-1}}\quad\text{\rm{near} } \ x=0.
\ee
\end{thm}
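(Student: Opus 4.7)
The plan is a contradiction argument via two successive rescalings of the integral equation \eqref{i1}, ending in a Liouville theorem for the Hardy--Littlewood--Sobolev equation on $\R^n$. Suppose the upper bound fails: there is $x_k \to 0$ with $M_k := |x_k|^{(2\sigma+\tau)/(p-1)} u(x_k) \to \infty$. Setting $r_k = |x_k|$, define the scale-invariant rescaling
\[
w_k(y) := r_k^{(2\sigma+\tau)/(p-1)} u(r_k y), \qquad y \in B_{1/r_k}\setminus\{0\}.
\]
A direct change of variables $z = r_k z'$ in \eqref{i1} shows that $w_k$ satisfies
\[
w_k(y) = \int_{B_{1/r_k}} \frac{|z'|^\tau w_k(z')^p}{|y-z'|^{n-2\sigma}}\, dz' + r_k^{(2\sigma+\tau)/(p-1)} h(r_k y),
\]
and the additive term tends to $0$ locally uniformly since $h$ is bounded and $(2\sigma+\tau)/(p-1)>0$. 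Along a subsequence, $e_k := x_k/r_k \to e_\infty$ on the unit sphere and $w_k(e_k) = M_k \to \infty$.

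The second ingredient is the Pol\'{a}\v{c}ik--Quittner--Souplet doubling lemma applied to $N_k(y) := w_k(y)^{(p-1)/(2\sigma)}$ on the annulus $D = \{1/3 < |y| < 3\}$. Since $N_k(e_k) \to \infty$ and $\operatorname{dist}(e_k, \partial D) \ge 2/3$, this supplies points $\bar y_k \in D$ with $N_k(\bar y_k) \ge N_k(e_k)$ and
\[
N_k(y) \le 2\, N_k(\bar y_k) \quad \text{for } |y - \bar y_k| \le k/N_k(\bar y_k).
\]
Up to a subsequence $\bar y_k \to \bar y_\infty$ with $|\bar y_\infty| \in [1/3,3]$. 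Now carry out the second, point-centred rescaling
\[
v_k(z) := \frac{w_k(\bar y_k + \nu_k z)}{w_k(\bar y_k)}, \qquad \nu_k := w_k(\bar y_k)^{-(p-1)/(2\sigma)} \to 0.
\]
The choice of $\nu_k$ makes the prefactors $\nu_k^{2\sigma} w_k(\bar y_k)^{p-1}$ equal to $1$, so changing variables in the integral equation for $w_k$ yields
\[
v_k(z) = \int_{\Omega_k} \frac{|\bar y_k + \nu_k w|^\tau v_k(w)^p}{|z-w|^{n-2\sigma}}\, dw + o(1),
\]
with $\Omega_k := \nu_k^{-1}(B_{1/r_k} - \bar y_k) \uparrow \R^n$, $v_k(0) = 1$, and $v_k \le 2^{2\sigma/(p-1)}$ on $B_k$.

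Because $|\bar y_\infty| > 0$, the weight $|\bar y_k + \nu_k w|^\tau$ converges to the positive constant $|\bar y_\infty|^\tau$ locally uniformly. Evaluating the integral equation for $v_k$ at $z=0$ gives
\[
\int_{\Omega_k} \frac{|\bar y_k + \nu_k w|^\tau v_k^p(w)}{|w|^{n-2\sigma}}\, dw = 1 + o(1),
\]
which, together with the local bound on $v_k$, supplies the tail control needed to extract a local uniform limit $v \in C(\R^n)$ with $v(0) = 1$, $v \ge 0$, solving
\[
v(z) = |\bar y_\infty|^\tau \int_{\R^n} \frac{v(w)^p}{|z-w|^{n-2\sigma}}\, dw.
\]
Absorbing the constant $|\bar y_\infty|^\tau$ by a scalar dilation produces a nontrivial nonnegative solution of the unweighted Hardy--Littlewood--Sobolev integral equation on $\R^n$, contradicting the Chen--Li--Ou Liouville theorem in the subcritical range $1 < p < (n+2\sigma)/(n-2\sigma)$. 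This establishes $u(x) \le C|x|^{-(2\sigma+\tau)/(p-1)}$.

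The gradient estimate then follows by differentiating \eqref{i1},
\[
\nabla u(x) = -(n-2\sigma)\int_{B_1}\frac{(x-y)\,|y|^\tau u(y)^p}{|x-y|^{n-2\sigma+2}}\, dy + \nabla h(x),
\]
inserting the bound on $u$ just proved, and splitting $B_1$ into the three regions $\{|y-x| < |x|/2\}$, $\{|y| < |x|/2\}$, and the remaining annular region; each piece is controlled by elementary estimates yielding $|\nabla u(x)| \le C|x|^{-(2\sigma+\tau+p-1)/(p-1)}$, and $\nabla h$ is bounded by the $C^1$ regularity of $h$. The main technical obstacle is the second-stage compactness: one must simultaneously arrange that $|\bar y_\infty|>0$ (handled by performing the doubling on a fixed annulus bounded away from $0$), and control the non-local tail $\int_{\Omega_k \setminus B_R}$ uniformly in $k$ so that in the limit it matches $\int_{\R^n \setminus B_R} |\bar y_\infty|^\tau v^p |z-w|^{-(n-2\sigma)}\, dw$ and vanishes as $R \to \infty$, thereby identifying $v$ as a genuine solution on all of $\R^n$.
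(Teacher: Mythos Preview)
Your argument is correct and follows essentially the same strategy as the paper: contradiction via the doubling lemma, rescaling, and a Liouville theorem for the limiting integral equation on $\R^n$. The paper packages it slightly differently---first isolating a standalone a priori estimate (Lemma~\ref{Wlemma:a}) for the \emph{non-singular} equation on $B_1$ with a general bounded positive coefficient, then reducing to it via the single rescaling $w(y)=R^{(2\sigma+\tau)/(p-1)}u(x_0+Ry)$ with $R=\tfrac12|x_0|$ (which removes the singularity and produces the coefficient $c(y)=|y+x_0/R|^\tau$)---and it includes $|\nabla u|$ directly in the doubling quantity so that the gradient bound drops out simultaneously rather than being derived afterward by differentiating the integral representation.
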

One consequence of the upper bound of the solution near the singularity in Theorem \ref{i2} is the following Harnack inequality.
\begin{cor}\label{j1}
Assume as in Theorem \ref{i2}, then for all $0<r<\frac{1}{4}$, then there exists a positive constant  $C$ independent of $r$ such that
\[
\sup_{B_{3r/2}\backslash B_{r/2}}u\leq C\inf_{B_{3r/2}\backslash B_{r/2}}u.
\]
\end{cor}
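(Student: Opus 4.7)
The plan is to exploit the natural scaling of equation \eqref{i1}. For $0 < r < 1/4$, set
\[
v_r(y) := r^{\frac{2\sigma+\tau}{p-1}} u(ry), \qquad y \in B_{1/r}.
\]
A direct change of variables in \eqref{i1} shows that $v_r$ satisfies
\[
v_r(y) = \int_{B_{1/r}} \frac{|w|^\tau v_r^p(w)}{|y - w|^{n-2\sigma}} dw + r^{\frac{2\sigma+\tau}{p-1}} h(ry),
\]
and Theorem \ref{i2} gives $v_r(y) \leq C|y|^{-(2\sigma+\tau)/(p-1)}$; in particular, $v_r$ is uniformly bounded on $\overline{B}_3 \setminus B_{1/5}$ independently of $r$. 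Since the Harnack ratio of $u$ on $B_{3r/2}\setminus B_{r/2}$ equals that of $v_r$ on the fixed annulus $B_{3/2}\setminus B_{1/2}$, the problem reduces to establishing $\sup v_r \leq C \inf v_r$ on that annulus, with $C$ independent of $r$.

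The next step is to derive a preliminary additive comparison using the integral equation. For $y_1, y_2 \in B_{3/2}\setminus B_{1/2}$ and a small fixed $\epsilon_0 > 0$ chosen so that $B(y_i, \epsilon_0) \subset B_3\setminus B_{1/5}$, split the integral for $v_r(y_1)$ into the near part over $N := B(y_1,\epsilon_0)\cup B(y_2,\epsilon_0)$ and the far part over $F := B_{1/r}\setminus N$. The near part is bounded by $C \epsilon_0^{2\sigma}$, using the uniform bound on the density $|w|^\tau v_r^p(w)$ on $B_3\setminus B_{1/5}$ together with the standard estimate for Riesz kernels. On $F$, one has $|y_1 - w|, |y_2-w| \geq \epsilon_0$ and the triangle inequality yields $|y_2-w|/|y_1-w| \leq 1 + 3/\epsilon_0$, so $|y_1-w|^{-(n-2\sigma)} \leq C(\epsilon_0) |y_2-w|^{-(n-2\sigma)}$, giving a far part bounded by $C(\epsilon_0) v_r(y_2)$. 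Combined with the uniform bound on the $h$-term, this yields
\[
v_r(y_1) \leq C_1 + C_2 v_r(y_2),
\]
with $C_1, C_2$ independent of $r$ and of $y_1, y_2$ in the annulus.

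To upgrade this additive estimate into a genuine multiplicative Harnack inequality, argue by contradiction: if no uniform multiplicative constant exists, extract a sequence $r_k \to 0$ along which $\sup v_{r_k}/\inf v_{r_k} \to \infty$. The uniform boundedness of $v_{r_k}$ together with the equicontinuity provided by the Riesz representation and the bounded density yield, after suitable normalization (dividing by the supremum on the annulus), a subsequential limit on the closed annulus. Passing to the limit in the integral equation — noting that the rescaled $h$-term behaves predictably under the normalization — produces a nonnegative limit whose strict positivity, forced by the positivity of the Riesz kernel, precludes it from vanishing at an interior point, giving the contradiction. The main obstacle is the persistent additive constant $C_1$ in the splitting estimate, which does not vanish under scaling and cannot be removed by elementary chaining; it is this constant that necessitates the compactness/contradiction step.
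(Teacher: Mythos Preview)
Your rescaling to $v_r$ and the additive comparison $v_r(y_1)\le C_1+C_2 v_r(y_2)$ are correct, and you rightly identify that the additive constant $C_1$ is the obstruction. The gap is in the compactness step. The dangerous regime is $M_k:=\sup_{B_{3/2}\setminus B_{1/2}} v_{r_k}\to 0$, and your argument does not survive it. If $M_k\to 0$, then dividing by $M_k$ destroys the equicontinuity you invoked: the gradient bound from Theorem~\ref{i2} gives $|\nabla v_{r_k}|\le C$ on the annulus, hence only $|\nabla \tilde v_k|\le C/M_k\to\infty$. Moreover the equation does not rescale linearly: after normalization the integral term reads $M_k^{p-1}\int |w|^\tau \tilde v_k^{\,p}/|y-w|^{n-2\sigma}\,dw$, and while $M_k^{p-1}\to 0$, you have no control of $\tilde v_k$ outside the annulus (the pointwise bound becomes $C|w|^{-(2\sigma+\tau)/(p-1)}/M_k$), so you cannot identify a limiting equation, let alone invoke a strong-positivity principle for it. If instead you do not normalize, the subsequential limit on the annulus is identically zero and carries no information about the ratio $M_k/m_k$. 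Thus the contradiction is not closed.

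The paper avoids this difficulty entirely by never passing through an additive estimate. After the same rescaling, for each $z\in\partial B_1$ it splits $w=v_r$ as
\[
w(x)=\int_{B_{9/10}(z)}\frac{|y|^\tau w^p(y)}{|x-y|^{n-2\sigma}}\,dy+g(x)+v(x),\qquad g(x):=\int_{B_{1/r}\setminus B_{9/10}(z)}\frac{|y|^\tau w^p(y)}{|x-y|^{n-2\sigma}}\,dy,
\]
and observes that the kernel comparison $|x_2-y|/|x_1-y|\le 7/2$ on $B_{1/2}(z)$ yields a \emph{multiplicative} Harnack inequality for $g$, while the rescaled $h$-term $v$ satisfies one because $h\in C^1$ is positive. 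With the tail $g+v$ already Harnack and the local density $|y|^\tau w^{p-1}$ bounded on $B_{9/10}(z)$, the paper then invokes the Harnack inequality for integral equations (Proposition~2.2 of \cite{JLX}) to conclude on $B_{1/2}(z)$, and finishes by a covering argument. The point is that the local piece is handled not by a crude $L^\infty$ bound (which is what produces your $C_1$) but by a genuine Harnack theorem for the integral equation; this is the missing ingredient in your approach.
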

The following theorem shows the asymptotic radial symmetry of the positive solution near the singularity.
\begin{thm}\label{i3}
Suppose that $0<\sigma<\frac{n}{2}$  is a real number, $-2\sigma<\tau\leq0$, $\frac{n+\tau}{n-2\sigma}<p\leq\frac{n+2\sigma+2\tau}{n-2\sigma}$, if $u\in C(B_1\backslash\{0\})$ is a positive solution of \eqref{i1} and $|x|^{\tau}u^p(x)\in L^1(B_1)$,  then
\[
u(x)=\bar u(|x|)(1+O(|x|))\quad\quad{\rm{as}}\ x\to 0,
\]
where $\bar u(|x|):=\dashint_{\mathbb{S}^n}u(|x|\theta)\ud \theta$ is the spherical average of $u$.
\end{thm}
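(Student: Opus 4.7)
The plan is to run the method of moving spheres directly on the integral equation \eqref{i1}, with the sphere center at a small point $x_0\in B_1\setminus\{0\}$, and to prove that the critical radius $\bar\lda(x_0)$ at which the procedure stops satisfies $\bar\lda(x_0)\geq c|x_0|$ for some constant $c>0$ independent of $x_0$. The asymptotic radial symmetry then follows by relating the values of $u$ at distinct points of a small sphere through a suitably chosen Kelvin inversion, in the manner of \cite{CJSX}. Setting
\[
u_{x_0,\lda}(y):=\left(\frac{\lda}{|y-x_0|}\right)^{n-2\sigma}u(y^*),\qquad y^*:=x_0+\frac{\lda^2(y-x_0)}{|y-x_0|^2},
\]
substituting into \eqref{i1}, and changing variables via $y=z^*$ (Jacobian $(\lda/|z-x_0|)^{2n}$, combined with the inversion identity $|x^*-z^*|=\lda^2|x-z|/(|x-x_0||z-x_0|)$), one obtains
\begin{equation*}
u_{x_0,\lda}(x)=\lda^{\beta}\int_{B_1}\frac{|z^*|^{\tau}|z-x_0|^{-\beta}\,u_{x_0,\lda}^{p}(z)}{|x-z|^{n-2\sigma}}\,\ud z+\Big(\frac{\lda}{|x-x_0|}\Big)^{n-2\sigma}h(x^*),
\end{equation*}
where $\beta:=n+2\sigma-p(n-2\sigma)$.

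The first step of the procedure is to verify, using the two-sided pointwise bounds of Theorem \ref{i2} and the Harnack inequality of Corollary \ref{j1}, that for each fixed small $x_0\neq 0$ there is $\lda_0(x_0)>0$ such that $u_{x_0,\lda}\leq u$ on the annular region $\Sigma_\lda:=\{y\in B_1:|y-x_0|>\lda\}$ for every $0<\lda<\lda_0$. With
\[
\bar\lda(x_0):=\sup\{\lda>0:\ u_{x_0,\mu}\leq u \text{ in }\Sigma_\mu\text{ for all }0<\mu\leq\lda\},
\]
the key claim is $\bar\lda(x_0)\geq c|x_0|$ for all sufficiently small $|x_0|$. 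I would argue by contradiction: if $\bar\lda(x_0)$ is much smaller than $|x_0|$, subtracting the two integral equations gives
\[
u(x)-u_{x_0,\bar\lda}(x)=\int K(x,z)\,\big[|z|^{\tau}u^{p}(z)-\bar\lda^{\beta}|z-x_0|^{-\beta}|z^*|^{\tau}u_{x_0,\bar\lda}^{p}(z)\big]\,\ud z+\Phi(x),
\]
with positive kernel $K$ and remainder $\Phi(x)=h(x)-(\bar\lda/|x-x_0|)^{n-2\sigma}h(x^*)$, which vanishes on $\pa B_{\bar\lda}(x_0)$ and is nonnegative in $\Sigma_{\bar\lda}$ by a first-order expansion of $h\in C^1(\overline{B_1})$ when $\bar\lda$ is small. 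The exponent constraint $p\leq\frac{n+2\sigma+2\tau}{n-2\sigma}$ (equivalently $\beta\geq -2\tau$), together with $\tau\leq 0$ and the inversion identity $|z^*-x_0||z-x_0|=\lda^2$, makes the bracket non-negative on $\{u\geq u_{x_0,\bar\lda}\}$ up to an error of order $|x_0|$. A strong-maximum-principle-type argument at the integral level then forces strict positivity on compact subsets of $\Sigma_{\bar\lda}$, allowing $\lda$ to be enlarged past $\bar\lda$ by a continuity/compactness argument—a contradiction. Once $\bar\lda(x_0)\geq c|x_0|$ is secured, the asymptotic symmetry is extracted as in \cite{CJSX}: given $x_1,x_2\in B_1$ with $|x_1|=|x_2|=r$ small, choose $x_0$ and $\lda$ of size $O(r)$ so that the Kelvin inversion on $\pa B_\lda(x_0)$ approximately interchanges $x_1$ and $x_2$, and apply the moving-sphere inequality $u(y)\geq(\lda/|y-x_0|)^{n-2\sigma}u(y^*)$ in both directions to obtain $u(x_1)/u(x_2)=1+O(r)$, i.e.\ $u(x)=\bar u(|x|)(1+O(|x|))$ as $x\to 0$.

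The main obstacle is the weight $|z^*|^{\tau}$ in the transformed equation. For the pure Yamabe case $\tau=0$ studied in \cite{JX, CJSX}, this weight is trivial and the Kelvin transform preserves the equation exactly at the critical Sobolev exponent; here $|z^*|=|x_0+\lda^2(z-x_0)/|z-x_0|^2|$ depends on $x_0$ in a nonlinear way, so for $x_0\neq 0$ the equation satisfied by $u_{x_0,\lda}$ does not coincide with \eqref{i1} but differs by terms of order $|x_0|/\lda$. The upper exponent bound $p\leq\frac{n+2\sigma+2\tau}{n-2\sigma}$ is precisely what is needed to absorb this discrepancy while preserving the correct sign in the moving-sphere comparison, and it is exactly this $O(|x_0|)$ error that ultimately produces the remainder $O(|x|)$ appearing in the conclusion of the theorem.
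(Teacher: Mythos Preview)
Your overall strategy coincides with the paper's: apply moving spheres directly to the integral equation \eqref{i1}, prove a Kelvin comparison $u_{x,\lda}\le u$ for suitable centers and radii, and then read off the asymptotic symmetry. The transformed equation, the subtraction identity with positive kernel $K$, and the role of the constraint $p\le\frac{n+2\sigma+2\tau}{n-2\sigma}$ (equivalently $\beta\ge -2\tau$) in controlling the weight are all correct.

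The gap is quantitative. You state the goal as $\bar\lda(x_0)\ge c|x_0|$ for a fixed $c>0$, and in the extraction you take the center $x_0$ and radius $\lda$ both of size $O(r)$. Neither is sufficient. If only $\lda\le c|x_0|$ with $c<1$ is available, then for any admissible choice of center on the line through $x_1,x_2\in\pa B_r$ the Kelvin factor $(\lda/|x_2-x_0|)^{n-2\sigma}$ stays bounded away from $1$ by a constant, and the comparison yields only a Harnack bound $u(x_1)\le C\,u(x_2)$, not $u(x_1)\le(1+O(r))u(x_2)$. The paper proves the sharp statement $\bar\lda(x)=|x|$ for every $0<|x|\le\va$: one assumes $\bar\lda(x)<|x|$ (strictly, not ``much smaller'') and obtains a contradiction by splitting the integral so that the uniformly positive contribution from an annulus away from the origin (where $u^p-u_{x,\lda}^p\ge C_1>0$, coming from the upper bound of Theorem~\ref{i2} and the lower bound $u\ge\int_{B_{3/2}}|y|^\tau u^p/|x-y|^{n-2\sigma}\ge C_1$) produces a term of order $|y-x|-\lda$ that dominates the near-sphere error, allowing $\lda$ to increase past $\bar\lda(x)$. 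In the extraction the paper then places the center at a \emph{fixed} small distance $\approx\va/4$ from the origin (not $O(r)$): with $x_3:=x_1+\frac{\va}{4}\frac{x_1-x_2}{|x_1-x_2|}$ and $\lda^2:=\frac{\va}{4}\big(|x_1-x_2|+\frac{\va}{4}\big)$ one checks $\lda<|x_3|<\va$, the inversion sends $x_2$ \emph{exactly} to $x_1$, and the Kelvin factor equals $\big(1+4|x_1-x_2|/\va\big)^{-(n-2\sigma)/2}=1-O(r)$. You should upgrade your key claim to $\bar\lda(x)=|x|$ and use this fixed-center construction; the $O(|x|)$ in the conclusion comes from this Kelvin factor, not from the weight discrepancy in the moving-spheres step.
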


\subsection{Proof of Theorem \ref{Wthm:a}}
Next we shall show that we can rewrite the differential equation \eqref{Weq:maineq1} into the integral equation \eqref{i1} involving the Riesz potential, more precise, if $u\in C^{2\sigma}(B_1\backslash\{0\})\cap L^{\frac{n}{n-2\sigma}}(B_{1})$ is a positive solution of \eqref{Weq:maineq1},   then 
\be\label{ii4}
u(x)=B(n,\sigma)\int_{B_r}\frac{|y|^{\tau}u^p(y)}{|x-y|^{n-2\sigma}}dy+h_1(x)\quad\quad\mbox{in }\ \ B_1 \backslash\{0\},
\ee
with
\[
B(n,\sigma):=\frac{\Gamma\left(\frac{n-2\sigma}{2}\right)}{2^{2\sigma}\pi^{n/2}\Gamma(\sigma)},
\]
where $\Gamma$ is the Gamma function, and $h_1$ is smooth in $B_r$ and satisfies $(-\Delta)^{\sigma}h_1=0$ in $B_r$. As a result, we can finish the proof of Theorem \ref{Wthm:a} by Theorem \ref{i2} and Theorem \ref{i3}.  For the purpose,  we first need the following proposition.

\begin{prop}\label{j2}
Suppose that $1\leq \sigma <\frac{n}{2}$ is an integer, $\tau>-2\sigma$,  $p>\frac{n+\tau}{n-2\sigma}$, and $u\in C^{2\sigma}(B_1\backslash\{0\})$ is a positive solution of \eqref{Weq:maineq1}, then $|x|^{\tau}u^p\in L^1(B_1)$.
\begin{proof}
 To do so, we take a smooth function $\eta$ defined in $\R$ as the cut-off function with values in $[0,1]$ satisfying
 \[
\eta(t):=
\begin{cases}
\begin{aligned}
&0,&\quad&\mbox{\rm if }\  |t|\leq 1,\\
&1,&\quad&\mbox{\rm if }\  |t|\geq 2.
\end{aligned}
\end{cases}
\]
For small $\varepsilon>0$, let $\varphi_{\varepsilon}(x)=\eta(\varepsilon^{-1}|x|)^q$ with $q=\frac{2\sigma p}{p-1}$. Multiplying both sides by $\varphi_{\varepsilon}(x)$ and using integration by parts, we have
\begin{equation*}
\begin{split}
\int_{B_1}|x|^{\tau}u^p \varphi_{\varepsilon} &=\int_{B_1}u(-\Delta)^\sigma\varphi_{\varepsilon}+
\int_{\partial B_1}\frac{\partial (-\Delta)^{\sigma-1}u }{\partial \nu}ds\\
&\leq C\varepsilon^{-2\sigma}\int_{\varepsilon\leq|x|\leq 2\varepsilon}u\eta(\varepsilon^{-1}|x|)^{q-2\sigma}+C\\
&=C\varepsilon^{-2\sigma}\int_{\varepsilon\leq|x|\leq 2\varepsilon}u\varphi_{\varepsilon}^{\frac{1}{p}}+C\\
&=C\varepsilon^{-2\sigma}\int_{\varepsilon\leq|x|\leq 2\varepsilon}|x|^{\frac{\tau}{p}}u\varphi_{\varepsilon}^{\frac{1}{p}}|x|^{-\frac{\tau}{p}}+C\\
&\leq C\varepsilon^{-2\sigma-\frac{\tau}{p}}\int_{\varepsilon\leq|x|\leq 2\varepsilon}|x|^{\frac{\tau}{p}}u\varphi_{\varepsilon}^{\frac{1}{p}}+C\\
&\leq C\varepsilon^{-2\sigma-\frac{\tau}{p}+n-\frac{n}{p}}\left(\int_{B_1}|x|^{\tau}u^p\varphi_{\varepsilon}\right)^{\frac{1}{p}}+C,
\end{split}
\end{equation*}
where  the H\"{o}lder inequality is used in the above inequality. Since $p>\frac{n+\tau}{n-2\sigma}$, we conclude that 
\[
\int_{2\varepsilon\leq|x|\leq 1}|x|^{\tau}u^p <\int_{B_1}|x|^{\tau}u^p \varphi_{\varepsilon} \leq C.
\]
By sending $\varepsilon\rightarrow 0$, we obtain
\[
\int_{B_1}|x|^{\tau}u^p \leq C.
\]
Thus, we obtain  that $|x|^{\tau}u^p\in L^1(B_1)$ and complete the proof.
\end{proof}
\end{prop}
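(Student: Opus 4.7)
The plan is to test the equation against a smooth radial cutoff that vanishes in a neighborhood of the origin, move all derivatives onto the cutoff by integrating by parts $\sigma$ times, and then close the resulting estimate with H\"older's inequality, using the subcriticality condition $p>(n+\tau)/(n-2\sigma)$ to absorb one factor back into the left-hand side. This is the natural analogue of the classical $\sigma=1$ Brezis--Kato type computation.

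Concretely, I would fix $\eta\in C^\infty(\R)$ with $\eta\equiv0$ on $|t|\le1$ and $\eta\equiv1$ on $|t|\ge2$, and set $\varphi_\va(x)=\eta(|x|/\va)^q$ with the specific exponent $q=2\sigma p/(p-1)$. This choice is tailored to the algebraic identity $q-2\sigma=q/p$, which together with the Leibniz rule yields the key pointwise bound
$$
|(-\Delta)^\sigma\varphi_\va(x)|\le C\va^{-2\sigma}\varphi_\va(x)^{1/p}.
$$
Indeed, every term produced by differentiating $\eta(|x|/\va)^q$ a total of $2\sigma$ times carries a factor of $\va^{-2\sigma}$ and a power $\eta^{q-k}$ with $0\le k\le 2\sigma$; since $0\le\eta\le1$, the largest such term is $\eta^{q-2\sigma}=\eta^{q/p}=\varphi_\va^{1/p}$.

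Next I would multiply \eqref{Weq:maineq1} by $\varphi_\va$ and integrate over $B_1$, transferring the powers of $\Delta$ onto $\varphi_\va$. No boundary contribution appears near the origin because $\varphi_\va$ and all its derivatives vanish identically on $B_\va$; on $\partial B_1$ the boundary terms coming from the $\sigma$ successive integrations by parts are uniformly bounded in $\va$ since $u\in C^{2\sigma}$ up to $\partial B_1$. Together with the pointwise bound above, this gives
$$
\int_{B_1}|x|^\tau u^p\varphi_\va\,\ud x\le C\va^{-2\sigma}\int_{\va\le|x|\le2\va}u\,\varphi_\va^{1/p}\,\ud x+C.
$$
On the annulus I rewrite $u\varphi_\va^{1/p}=(|x|^\tau u^p\varphi_\va)^{1/p}\,|x|^{-\tau/p}$, use $|x|^{-\tau/p}\lesssim\va^{-\tau/p}$ and $|\{\va\le|x|\le2\va\}|\lesssim\va^n$, and apply H\"older with exponents $p$ and $p/(p-1)$. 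Setting $I_\va:=\int_{B_1}|x|^\tau u^p\varphi_\va\,\ud x$, this reduces to
$$
I_\va\le C\va^{\alpha}\,I_\va^{1/p}+C,\qquad \alpha=n-2\sigma-\frac{n+\tau}{p}.
$$

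The hypothesis $p>(n+\tau)/(n-2\sigma)$ is precisely $\alpha>0$, so Young's inequality absorbs $I_\va^{1/p}$ into the left-hand side and yields $I_\va\le C$ uniformly in $\va$; monotone convergence as $\va\to0$ then gives $|x|^\tau u^p\in L^1(B_1)$. The step I expect to require the most care is the pointwise bound on $(-\Delta)^\sigma\varphi_\va$: for large integer $\sigma$ one must check that every mixed-derivative term produced by the chain rule on $\eta(|x|/\va)^q$ is actually controlled by the single quantity $\va^{-2\sigma}\varphi_\va^{1/p}$, which is exactly where the specific choice $q=2\sigma p/(p-1)$ is forced upon us.
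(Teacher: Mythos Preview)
Your proposal is correct and follows essentially the same argument as the paper: the same cutoff $\varphi_\va=\eta(|x|/\va)^q$ with $q=2\sigma p/(p-1)$, the same integration by parts transferring $(-\Delta)^\sigma$ onto $\varphi_\va$, the same H\"older step producing the exponent $\alpha=n-2\sigma-(n+\tau)/p>0$, and the same limiting argument. If anything, you are more explicit than the paper about why the choice of $q$ forces the pointwise bound $|(-\Delta)^\sigma\varphi_\va|\le C\va^{-2\sigma}\varphi_\va^{1/p}$ and about the Young-type absorption of $I_\va^{1/p}$; the only cosmetic caveat is that regularity of $u$ up to $\partial B_1$ is obtained, as in the paper, by shrinking the ball if necessary.
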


Furthermore, we also need to  recall  some known facts.  Let $G_1(x,y)$ be the Green function of  $-\Delta$ on the unit ball, i.e.
\[
G_1(x,y)=\frac{1}{(n-2)w_{n-1}}\left(|x-y|^{2-n}-\left|\frac{x}{|x|}-|x|y\right|^{2-n}\right)\quad\mbox{for} \ \ x,y\in B_1,
\]
and define
\[
G_{\sigma}(x,y):=\int_{B_1\times\cdots\times B_1}G_1(x,y_1)G_1(y_1,y_2)\cdots G_1(y_{\sigma-1},y)dy_1\cdots dy_{\sigma-1},
\]
then we  have
\[
G_{\sigma}(x,y)=B(n,\sigma)|x-y|^{2\sigma-n}+A_{\sigma}(x,y),
\]
where  $A_{\sigma}(\cdot,\cdot)$ is smooth in $B_1\times B_1$.  Let
\[
H_1(x,y):=-\frac{\partial}{\partial\nu_y}G_1(x,y)=\frac{1-|x|^2}{w_{n-1}|x-y|^{n}}\quad\mbox{for} \ \ x\in B_1,\ y\in \partial B_1,
\]
where $w_{n-1}$ is the surface area of the unit sphere in $\R^n$, then for $2\leq i\leq \sigma$, define
\[
H_{i}(x,y):=\int_{B_1\times\cdots\times B_1}G_1(x,y_1)G_1(y_1,y_2)\cdots G_1(y_{i-2},y_{i-1})H_1(y_{i-1},y)dy_1\cdots dy_{i-1}.
\]
Furthermore, for a function $u\in C^{2}(B_{1})\cap C(\overline{B_1})$,  we have
\[
u(x)=\int_{B_{1}}G_1(x,y)(-\Delta u)(y)dy+\int_{\partial B_{1}}H_1(x,y)(-\Delta u)(y)dS_{y}.
\]
By induction,  we have for $2\sigma<n$, $u\in C^{2\sigma}(B_{1})\cap C^{2\sigma-2}(\overline{B_1})$,  we have
\[
u(x)=\int_{B_{1}}G_\sigma(x,y)(-\Delta)^{\sigma} u(y)dy+\sum_{i=1}^{\sigma}\int_{\partial B_{1}}H_i(x,y)(-\Delta)^{i-1} u(y)dS_{y}.
\]

Now, we start our proof of Theorem \ref{Wthm:a} by using the above argument

\begin{proof}[Proof of Theorem \ref{Wthm:a}]
We can suppose that $u\in C^{2\sigma}(\overline{B_1}\backslash\{0\})$ and $u>0$ in $\overline{B_1}$, otherwise we just consider the equation in a smaller ball.  By the above argument, we know that  we only need to obtain \eqref{ii4}, then we can finish the proof. To prove \eqref{ii4}, let
\[
v(x):=\int_{B_1}G_{\sigma}(x,y)|y|^{\tau}u^p(y)dy+\sum_{i=1}^{m}\int_{\partial B_1}H_{i}(x,y)(-\Delta)^{\sigma-i}u(y)dS_y,
\]
and
\[
w:=u-v.
\]
Then
\[
(-\Delta)^{\sigma}w=0\quad\quad\mbox{in} \ \ B_1\backslash\{0\}.
\]
By the generalized  Bocher's Theorem \cite{FKM} for polyharmonic function, 
\[
w(x)=\sum_{|\alpha|\leq 2\sigma-1}A_{\alpha}D^{\alpha}(|x|^{2\sigma-n})+g(x),
\]
where $\alpha=(\alpha_{1}, \alpha_{2}, \cdots, \alpha_{n})$ is multi-index,  $A_{\alpha}$ are constants, and $g(x)$ is a smooth
solution of $(-\Delta)^\sigma g(x)=0$ in $B_1$. If we can claim that $A_{\alpha}=0$ for $|\alpha|\leq 2\sigma-1$, then $w(x)$ is a classical polyharmonic function on $B_1$, that is,
\[
(-\Delta)^\sigma w(x)=0\quad\quad\mbox{in} \ \ B_1. 
\]
Moreover, since $w=\Delta w=\cdots=\Delta^{\sigma-1}w=0$ on $\partial B_1$, $w=0$ which implies that $u=v$.  Thus,
\[
u(x)=B(n,\sigma)\int_{B_r}\frac{|y|^{\tau}u^p(y)}{|x-y|^{n-2\sigma}}dy+h_1(x),
\]
where
\begin{equation*}
\begin{split}
h_1(x)=&\int_{B_r}A_{\sigma}(x,y)|y|^{\tau}u^p(y)dy+\int_{B_1\backslash B_r}G_{\sigma}(x,y)|y|^{\tau}u^p(y)dy\\
&+\sum_{i=1}^\sigma\int_{\partial B_1}H_i(x,y)(-\Delta)^{i-1}u(y)dS_y.
\end{split}
\end{equation*}
Since $-\Delta u\geq 0$ in $B_1\backslash\{0\}$, and $u>0$ in $\overline{B_1}$, we know from the Maximum Principle that $c_1:=
\inf_{B_1}u=\min_{\partial B_1}u>0$. By $|y|^{\tau}u^p(y)\in L^1(B_1)$ from Proposition \ref{j2}, we can find that $r<\frac{1}{4}$ such that for $x\in B_r$,
\[
\int_{B_r}|A_{\sigma}(x,y)||y|^{\tau}u^p(y)dy\leq\frac{c_1}{2}.
\]
Hence, by condition \eqref{fuhao}, we have for $x\in B_r$,
\begin{equation*}
\begin{split}
h_1(x)&\geq -\frac{c_1}{2}+\int_{\partial B_1}H_i(x,y)u(y)dS_y\\
&\geq -\frac{c_1}{2}+\inf_{B_1}u=\frac{c_1}{2}.
\end{split}
\end{equation*}
On the other hand, $h_1$ is smooth in $B_r$ and satisfies $(-\Delta)^{\sigma}h_1=0$ in $B_r$. Then we can finish the proof.

To do it,  by contradiction, we may assume that there exists a multi-index  $\alpha_{0}\in \R^{n}$ satisfying $|\alpha_{0}|\leq 2\sigma-1$ such that $A_{\alpha_{0}}\neq0$. Thus, for large $\lambda$, we
infer
\be\label{w2}
\left|\left\{x\in B_\rho:|w(x)|>\lambda\right\}\right|> C \lambda^{-\frac{n}{n-2\sigma}}.
\ee
On the other hand, combining with $|y|^{\tau}u^p(y)\in L^1(B_1)$ and the fact that the Riesz potential $|y|^{2\sigma-n}$ is weak type $\left(1,\frac{n}{n-2\sigma}\right)$, then we obtain that $v\in L_{weak}^{\frac{n}{n-2\sigma}}(B_1)\cap L^1(B_1)$. Moreover, for every $\varepsilon>0$ we can choose $\rho>0$ such that $\int_{B_{2\rho}}|y|^{\tau}u^p(y)dy<\varepsilon$, then for all sufficiently large $\lambda$, we have
\[
\left\{x\in B_\rho:|v(x)|>\lambda\right\}\subset \left\{x\in B_\rho:\int_{B_{2\rho}}G_{\sigma}(x,y)|y|^{\tau}u^p(y)dy>\frac{\lambda}{2}\right\}
\] 
which implies that
\[
\left|\left\{x\in B_\rho:|v(x)|>\lambda\right\}\right|\leq \left|\left\{x\in B_\rho:\int_{B_{2\rho}}G_{\sigma}(x,y)|y|^{\tau}u^p(y)dy>\frac{\lambda}{2}\right\}\right|\leq C(n,\sigma)\varepsilon \lambda^{-\frac{n}{n-2\sigma}}.
\]
Due to $u\in L^{\frac{n}{n-2\sigma}}(B_1)$,  we can choose suitable $\rho>0$ such that $\int_{B_\rho}u^{\frac{n}{n-2\sigma}}< \varepsilon$, 
\[
\left|\left\{x\in B_\rho:|u(x)|>\frac{\lambda}{2}\right\}\right|\leq \left(\frac{2}{\lambda}\right)^{\frac{n}{n-2\sigma}}\int_{B_\rho}u^{\frac{n}{n-2\sigma}}\leq 2^{\frac{n}{n-2\sigma}}\varepsilon \lambda^{-\frac{n}{n-2\sigma}}.
\]
Hence, $w\in L_{weak}^{\frac{n}{n-2\sigma}}(B_1)\cap L^1(B_1)$ and for every $\varepsilon>0$, there exist $\rho>0$ such that for all sufficiently large $\lambda$,
\[
\left|\left\{x\in B_\rho:|w(x)|>\lambda\right\}\right|\leq \left|\left\{x\in B_\rho:|u(x)|>\frac{\lambda}{2}\right\}\right|+\left|\left\{x\in B_\rho:|v(x)|>\frac{\lambda}{2}\right\}\right|.
\]
It follows that
\[
\left|\left\{x\in B_\rho:|w(x)|>\lambda\right\}\right|\leq C(n,\sigma)\varepsilon \lambda^{-\frac{n}{n-2\sigma}}.
\]
This is a contradiction with \eqref{w2} provided that $\varepsilon$ is small enough. Up to now, we complete the proof.
\end{proof}

\section{The upper bound near the  isolated singularity} \label{sec:bd}
In this section, we shall give  proofs of Theorem \ref{i2} and Corollary \ref{j1} respectively. The following we  start our proof.
\subsection{Proof of Theorem \ref{i2}}
First, we recall the Doubling Property  \cite[Lemma 5.1]{PQS} and denote $B_{R}(x)$ as the ball in $\R^{n}$ with radius $R$ and center $x$. For convenience, we write $B_R(0)$ as $B_R$ for short.
\begin{prop}\label{prop:double}
Suppose that $\emptyset\neq D\subset \Sigma \subset \R^n$, $\Sigma$ is closed and $\Gamma=\Sigma \setminus D$. Let $M: D\rightarrow (0,\infty)$ be bounded on compact subset of $D$. If  for a  fixed  positive constant $k$, there exists  $y\in D$ satisfying
\[
M(y){\rm{dist}}(y,\Gamma)>2k,
\]
then there exists $x\in D$ such that
\[
M(x)\geq M(y),\quad\quad M(x){\rm{dist}}(x,\Gamma)>2k,
\]
and for all $z\in D\cap B_{kM^{-1}(x)}(x)$,
\[
M(z)\leq 2M(x).
\]
\end{prop}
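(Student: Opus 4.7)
The natural approach is a proof by contradiction coupled with an iterative construction. I would suppose the conclusion fails and, starting from $x_0:=y$, build a sequence $\{x_j\}\subset D$ by using the hypothetical failure at $x_j$ to extract
\[
x_{j+1}\in D\cap B_{kM^{-1}(x_j)}(x_j)\quad\text{with}\quad M(x_{j+1})>2M(x_j).
\]
Iterated, this forces $M(x_j)\ge 2^jM(y)\to\infty$, while the admissible radii shrink geometrically.

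For the construction to continue indefinitely, one needs $M(x_j)\,\mathrm{dist}(x_j,\Gamma)>2k$ at every stage, since this is the hypothesis under which the assumed failure of the conclusion delivers the next point. I would verify this by induction: assuming it at step $j$, the triangle inequality yields
\[
\mathrm{dist}(x_{j+1},\Gamma)\ge \mathrm{dist}(x_j,\Gamma)-|x_{j+1}-x_j|>2kM^{-1}(x_j)-kM^{-1}(x_j)=kM^{-1}(x_j),
\]
and multiplying by $M(x_{j+1})>2M(x_j)$ recovers the required inequality at step $j+1$. Simultaneously, $|x_{j+1}-x_j|<kM^{-1}(x_j)\le k\,2^{-j}M^{-1}(y)$ makes $\{x_j\}$ Cauchy with $|x_j-y|<2kM^{-1}(y)<\mathrm{dist}(y,\Gamma)$.

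The limit $x^*:=\lim x_j$ therefore lies in $\Sigma$ by closedness and strictly outside $\Gamma$ by the preceding bound, so $x^*\in D$. Then $K:=\{x_j\}_{j\ge 0}\cup\{x^*\}$ is a compact subset of $D$ on which $M$ is unbounded, contradicting the hypothesis that $M$ is bounded on compact subsets of $D$. The only delicate point is the bookkeeping in the induction: one must balance the geometric decay of the radii $kM^{-1}(x_j)$ against the cumulative loss of distance to $\Gamma$, and the strict inequality $M(y)\,\mathrm{dist}(y,\Gamma)>2k$ is precisely the slack that keeps this tradeoff viable at every step.
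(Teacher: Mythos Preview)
Your argument is correct and is essentially the standard proof of the doubling lemma as given in the cited reference \cite{PQS}. Note that the paper itself does not prove this proposition at all: it merely recalls it as \cite[Lemma~5.1]{PQS} and uses it as a black box, so there is no ``paper's own proof'' to compare against beyond the original source, whose argument your sketch faithfully reproduces.
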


Next, in order to prove Theorem \ref{i2}, we start with the following lemma.
\begin{lem}\label{Wlemma:a}
Let $1< p <\frac{n+2\sigma}{n-2\sigma}$, $0<\alpha\leq1$ and $c(x)\in C^{2\sigma,\alpha}(\overline{B_1})$ satisfy
\be\label{7}
\|c\|_{C^{2,\alpha}(\overline{B_1})}\leq C_1,\quad c(x)\geq C_2\quad{\rm in }\ \  \overline{B_1}
\ee
for some positive constants $C_1$, $C_2$. Suppose that $h\in C^{1}(B_1)$ and $u\in C^{2\sigma}(B_1)$ is a nonnegative solution of
\be\label{HH}
u(x)=\int_{B_1}\frac{c(y)u^p(y)}{|x-y|^{n-2\sigma}}dy+h(x)\quad\quad {\rm in }\ \  \ B_1,
\ee
then there exists a positive constant  $C$ depending only on $n$, $\sigma$, $p$, $C_1$, $C_2$  such that
\[
|u(x)|^{\frac{p-1}{2\sigma}}+ |\nabla u(x)|^{\frac{p-1}{p+2\sigma-1}}\leq C[{\rm{dist}}(x,\partial B_1)]^{-1}\quad\quad {\rm in}\ \ \ B_{1}.
\]
\end{lem}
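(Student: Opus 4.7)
The plan is to follow the doubling-and-rescaling scheme of Pol\'a\v{c}ik--Quittner--Souplet, reducing the desired estimate to a Liouville theorem on $\R^n$. Set
\[
M(x):=u(x)^{(p-1)/(2\sigma)}+|\nabla u(x)|^{(p-1)/(p+2\sigma-1)}
\]
and argue by contradiction: if the asserted bound fails, one may extract sequences of admissible data $(c_j,h_j)$, corresponding positive solutions $u_j$ of \eqref{HH}, and points $x_j\in B_1$ with $M_j(x_j)\,\mathrm{dist}(x_j,\partial B_1)\to\infty$. Applying Proposition \ref{prop:double} with $\Sigma=\overline{B_1}$, $\Gamma=\partial B_1$, $D=B_1$, and parameters $k_j\to\infty$ chosen so that $2k_j\leq M_j(x_j)\,\mathrm{dist}(x_j,\partial B_1)$, I obtain $y_j\in B_1$ with $M_j(y_j)\geq M_j(x_j)$, $M_j(y_j)\,\mathrm{dist}(y_j,\partial B_1)>2k_j$, and the crucial doubling control $M_j(z)\leq 2M_j(y_j)$ on $B_{k_j/M_j(y_j)}(y_j)$.

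Next, set $\lambda_j:=M_j(y_j)^{-1}\to 0$ and rescale
\[
v_j(z):=\lambda_j^{2\sigma/(p-1)}u_j(y_j+\lambda_j z),\quad z\in\Omega_j:=\lambda_j^{-1}(B_1-y_j).
\]
A direct change of variables in \eqref{HH}, together with the identity $2\sigma/(p-1)+1=(p+2\sigma-1)/(p-1)$ for the gradient scaling, turns the equation into
\[
v_j(z)=\int_{\Omega_j}\frac{c_j(y_j+\lambda_j w)\,v_j^p(w)}{|z-w|^{n-2\sigma}}\,dw+\tilde h_j(z),\quad \tilde h_j(z):=\lambda_j^{2\sigma/(p-1)}h_j(y_j+\lambda_j z),
\]
with the normalization $v_j(0)^{(p-1)/(2\sigma)}+|\nabla v_j(0)|^{(p-1)/(p+2\sigma-1)}=1$ and the pointwise bound $v_j(z)^{(p-1)/(2\sigma)}+|\nabla v_j(z)|^{(p-1)/(p+2\sigma-1)}\leq 2$ on $B_{k_j}(0)$. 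Since $k_j\lambda_j\leq\tfrac12\mathrm{dist}(y_j,\partial B_1)$, the source points $y_j+\lambda_j w$ remain uniformly inside $B_1$ on each fixed ball, so $\tilde h_j\to 0$ locally uniformly and, along a subsequence $y_j\to y_\infty\in\overline{B_1}$, $c_j(y_j+\lambda_j\cdot)\to c_\infty:=c(y_\infty)\geq C_2>0$ locally uniformly.

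Using the local sup bound on $v_j$ and $\nabla v_j$ together with regularity for the rescaled Riesz integral equation, $\{v_j\}$ is bounded in $C^{2\sigma,\alpha}_{\mathrm{loc}}(\R^n)$, so a subsequence converges in $C^{2\sigma}_{\mathrm{loc}}(\R^n)$ to a nonnegative limit $v$ that inherits the normalization at the origin. Passing to the limit via Fatou on the tail and dominated convergence on compact sets, $v$ satisfies
\[
v(z)\geq c_\infty\int_{\R^n}\frac{v^p(w)}{|z-w|^{n-2\sigma}}\,dw\quad\text{on }\R^n,
\]
with $v(0)^{(p-1)/(2\sigma)}+|\nabla v(0)|^{(p-1)/(p+2\sigma-1)}=1$, hence $v\not\equiv 0$. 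The Chen--Li--Ou Liouville theorem for this integral (in)equation in the subcritical range $1<p<(n+2\sigma)/(n-2\sigma)$, proved by moving planes in integral form, then forces $v\equiv 0$, a contradiction.

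The most delicate step I foresee is making the passage to the limit in the Riesz integral rigorous. The natural a priori bound comes by setting $z=0$ in the rescaled equation,
\[
\int_{\Omega_j}c_j(y_j+\lambda_j w)\,v_j^p(w)|w|^{2\sigma-n}\,dw=v_j(0)-\tilde h_j(0)\leq C,
\]
which survives the rescaling thanks to the integral equation itself and yields, via Fatou, $v^p\in L^1(\R^n,|w|^{2\sigma-n}\,dw)$ in the limit. Transferring this into a genuine tail estimate for non-zero $z$ requires combining the doubling bound on $B_{k_j}(0)$ with the geometric comparison $|z-w|\geq|w|/2$ for $w$ outside a large ball. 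Once this is settled the invocation of the Chen--Li--Ou Liouville theorem is standard and completes the proof.
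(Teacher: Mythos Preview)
Your scheme---contradiction, Doubling Lemma, the rescaling $v_j(z)=\lambda_j^{2\sigma/(p-1)}u_j(y_j+\lambda_j z)$, compactness, and Liouville---is precisely the paper's; the paper in fact writes the rescaled integral over $B_k$, asserts the limiting \emph{equality} on $\R^n$ without discussing the tail, and quotes Wei--Xu rather than Chen--Li--Ou for the subcritical Liouville theorem.

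There is, however, one genuine gap in your version. You stop at the \emph{inequality}
\[
v(z)\ \ge\ c_\infty\int_{\R^n}\frac{v^p(w)}{|z-w|^{n-2\sigma}}\,dw
\]
and then invoke ``Chen--Li--Ou for this integral (in)equation''. But the Liouville theorem for the integral \emph{inequality} is only valid up to the Serrin exponent $p\le n/(n-2\sigma)$. For $n/(n-2\sigma)<p<(n+2\sigma)/(n-2\sigma)$ the bounded positive function $u(x)=(1+|x|^2)^{-\sigma/(p-1)}$ satisfies $u\ge c\,I(u^p)$ for some $c>0$, since both sides decay exactly like $|x|^{-2\sigma/(p-1)}$; by the two-parameter scaling $u\mapsto A\,\mu^{2\sigma/(p-1)}u(\mu\cdot)$ one can match any prescribed $c_\infty>0$ together with the normalization at the origin. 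So the inequality alone does not yield a contradiction in the full subcritical range. The cure is exactly what your last paragraph is reaching for: comparing the tails at $z$ and at $0$ via $|z-w|/|w|\to 1$ shows that the leftover in the limit is a nonnegative \emph{constant} $L$, so that
\[
v(z)=c_\infty\int_{\R^n}\frac{v^p(w)}{|z-w|^{n-2\sigma}}\,dw+L.
\]
If $L>0$ then $v\ge L$ on $\R^n$, whence the Riesz integral of $v^p$ diverges, contradicting the global bound on $v$ inherited from the doubling step. Hence $L=0$, you recover the \emph{equality}, and the standard subcritical Liouville theorem for the integral equation finishes the proof.
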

\begin{proof}
Arguing by contradiction, we assume that for $k=1,2,\cdots$, there exist nonnegative functions $u_k$ satisfying \eqref{HH} and points $y_k\in B_{1}$ such that
\be\label{H}
|u_k(y_k)|^{\frac{p-1}{2\sigma}}+ |\nabla u_k(y_k)|^{\frac{p-1}{p+2\sigma-1}}>2k[{\rm{dist}}(y_k,\partial B_1)]^{-1}.
\ee
Define
\[
M_k(x):=|u_k(x)|^{\frac{p-1}{2\sigma}}+ |\nabla u_k(x)|^{\frac{p-1}{p+2\sigma-1}}.
\]
Via Proposition \ref{prop:double}, for $D=B_1$, $\Gamma=\partial B_1$, there exists $x_k\in B_{1}$ such that
\begin{equation}\label{Wlemma33}
M_k(x_k)\geq M_k(y_k),\quad M_k(x_k)> 2k[{\rm{dist}}(x_k,\partial B_1)]^{-1}\geq 2k,
\end{equation}
and for any $z\in B_1$ and $|z-x_k|\leq kM_k^{-1}(x_k)$,
\begin{equation}\label{Wlemma3}
M_k(z)\leq 2M_k(x_k).
\end{equation}
It follows from \eqref{Wlemma33} that
\begin{equation}\label{Wlemma4}
\lambda_k:=M^{-1}_k(x_k)\rightarrow 0\quad\quad{\rm{as} }\ k \rightarrow \infty,
\end{equation}
\begin{equation}\label{Wjuli}
\mbox{dist}(x_k,\partial B_1)>2k\lambda_k,\quad\quad\quad{\rm{for} }\ k=1,2,\cdots.
\end{equation}
Consider
\[
w_k(y):=\lambda_{k}^{\frac{2\sigma}{p-1}}u_{k}(x_k+\lambda_{k}y),\ \ v_k(y):=\lambda_{k}^{\frac{2\sigma}{p-1}}h_{k}(x_k+\lambda_{k}y)\quad\quad \mbox{in }\  B_k.
\]
Combining \eqref{Wjuli}, we obtain that for any $y\in B_k$,
\[
|x_k+\lambda_{k}y-x_k|\leq \lambda_{k} |y|\leq \lambda_{k} k <\frac{1}{2}\mbox{dist}(x_k,\partial B_1),
\]
that is,
\[
x_k+\lambda_{k}y\in B_{\frac{1}{2}\mbox{dist}(x_k,\partial B_1)}(x_k)\subset B_1.
\]
Therefore,  $w_k$ is well defined in $B_k$ and
\[
|w_k(y)|^{\frac{p-1}{2\sigma}}=\lambda_{k}|u_{k}(x_k+\lambda_{k}y)|^{\frac{p-1}{2\sigma}},
\]
\[
|\nabla w_k(y)|^{\frac{p-1}{2\sigma+p-1}}=\lambda_{k}|\nabla u_{k}(x_k+\lambda_{k}y)|^{\frac{p-1}{2\sigma+p-1}}.
\]
From \eqref{Wlemma3}, we find that for all  $y\in B_k$,
\[
|u_{k}(x_k+\lambda_{k}y)|^{\frac{p-1}{2\sigma}}+|\nabla u_{k}(x_k+\lambda_{k}y)|^{\frac{p-1}{2\sigma+p-1}}\leq 2\left(|u_k(x_k)|^{\frac{p-1}{2\sigma}}+ |\nabla u_k(x_k)|^{\frac{p-1}{p+2\sigma-1}}\right).
\]
That is,
\be\label{GUJI}
|w_k(y)|^{\frac{p-1}{2\sigma}}+|\nabla w_k(y)|^{\frac{p-1}{2\sigma+p-1}}\leq2\lambda_{k}M_k(x_k)=2.
\ee
Moreover, $w_k$ satisfies
\be\label{ZHUYAO}
w_k(x)=\int_{B_k}\frac{c_k(y)w_k^p(y)}{|x-y|^{n-2\sigma}}dy+v_k(x)\quad\quad \mbox{in }\  B_k,
\ee
and $$|w_k(0)|^{\frac{p-1}{2\sigma}}+|\nabla w_k(0)|^{\frac{p-1}{2\sigma+p-1}}=1,$$ where $c_k(y):=c( x_k+\lambda_{k}y)$. By \eqref{Wlemma4} it follows that
\[
\|v_k\|_{C^1(B_k)}\rightarrow 0.
\]

By condition \eqref{7}, we obtain that $\{c_k\}$ is uniformly bounded in $\R^n$. For each $R>0$, and for all $y$, $z\in B_R$, we have
\[
|D^{\beta}c_k(y)-D^{\beta}c_k(z)|\leq C_1\lambda_k^{|\beta|}|\lambda_k(y-z)|^{\alpha}\leq C_1|y-z|^{\alpha},\quad |\beta|=0,1,\cdots,2\sigma
\]
for $k$  is large enough. Therefore, by Arzela-Ascoli's Theorem, there exists a function $c\in C^{2\sigma}(\R^n)$,  after extracting a subsequence, $c_k\rightarrow c$ in $C^{2\sigma}_{\rm loc}(\R^n)$. Moreover, by \eqref{Wlemma4}, we obtain
\be\label{Chang}
|c_k(y)-c_k(z)|\rightarrow 0\quad\quad\mbox{as }\ k\rightarrow\infty.
\ee
This implies that the function $c$ actually is a constant $C$. By  \eqref{7} again, $c_k \geq C_2>0$, we conclude that $C$ is a positive constant.

On the other hand, applying  the regularity results in Section 2.1 of \cite{JLX}, after passing to a subsequence, we have, for some nonnegative function
$w\in  C^{2,\alpha}_{{\rm loc}}(\R^n)$,
\[
w_k\rightarrow w\quad\quad\mbox{in }\ C^\alpha_{{\rm loc}}(\R^n)
\]
for some $\alpha>0$. Moreover, $w$ satisfies
\be\label{X}
w(x)=\int_{\R^n}\frac{Cw^p(y)}{|x-y|^{n-2\sigma}}dy\quad\quad\mbox{in }\ \R^{n}
\ee
and $$|w(0)|^{\frac{p-1}{2\sigma}}+|\nabla w(0)|^{\frac{p-1}{2\sigma+p-1}}=1.$$ Since $p<\frac{n+2\sigma}{n-2\sigma}$, this contradicts the Liouville-type result \cite[Theorem 1.4]{WX} that the only nonnegative entire solution of \eqref{X} is $w=0$. Then we conclude the lemma.
\end{proof}

We now turn to  prove  Theorem \ref{i2}.
\begin{proof}[Proof of Theorem \ref{i2}]For  $x_0\in B_{1/2}\backslash\{0\}$, we denote $R:=\frac{1}{2}|x_0|$. Then for any $y\in B_1$, we have $\frac{|x_0|}{2}<|x_0+Ry|<\frac{3|x_0|}{2}$, and deduce that $x_0+Ry\in B_1\backslash
\{0\}$.  Define
\[
w(y):=R^{\frac{2\sigma+\tau}{p-1}}u(x_0+Ry),\ \ v(y):=R^{\frac{2\sigma+\tau}{p-1}}h(x_0+Ry).
\]
Therefore, we obtain that
\[
w(x)=\int_{B_1}\frac{c(y)w^p(y)}{|x-y|^{n-2\sigma}}dy+v(x)\quad\quad\mbox{in } \  B_1,
\]
where $c(y):=|y+\frac{x_0}{R}|^{\tau}$. Notice that
$$1<\left|y+\frac{x_0}{R}\right|<3\quad \mbox{in }\ \overline{B_1}.$$ Moreover,
$$\|c\|_{C^{3}(\overline{B_1})}\leq C,\quad c(y)\geq 3^{-2\sigma}\quad \mbox{in }\ \overline{B_1}.$$ Applying Lemma \ref{Wlemma:a}, we obtain that
\[
|w(0)|^{\frac{p-1}{2\sigma}}+ |\nabla w(0)|^{\frac{p-1}{p+2\sigma-1}}\leq C.
\]
That is,
\[
(R^{\frac{2\sigma+\tau}{p-1}}u(x_0))^{\frac{p-1}{2\sigma}}+(R^{\frac{2\sigma+\tau}{p-1}+1}|\nabla u(x_0)|)^{\frac{p-1}{p+2\sigma-1}}\leq C.
\]
Hence,
\[
\begin{array}{ll}
u(x_0)\leq CR^{-\frac{2\sigma+\tau}{p-1}}\leq C|x_0|^{-\frac{2\sigma+\tau}{p-1}},\vspace{0.2cm}\\
|\nabla u(x_0)|\leq CR^{-\frac{2\sigma+\tau+p-1}{p-1}}\leq C|x_0|^{-\frac{2\sigma+\tau+p-1}{p-1}}.
\end{array}
\]
Then Theorem \ref{i2} is proved by the fact that $x_0\in B_{1/2}\setminus\{0\}$ is arbitrary.
\end{proof}
\subsection{Proof of Corollary \ref{j1}}
Using the upper bound, we shall prove the Harnack inequality.
\begin{proof}[Proof of Corollary \ref{j1}]
Let
\[
w(y):=r^{\frac{2\sigma+\tau}{p-1}}u(ry),\ \ v(y):=r^{\frac{2\sigma+\tau}{p-1}}h(ry).
\]
Then
\[
w(x)=\int_{B_{1/r}}\frac{|y|^{\tau}w^p(y)}{|x-y|^{n-2\sigma}}dy+v(x)\quad\quad\mbox{in } \  B_{1/r}\backslash\{0\}.
\]
Theorem \ref{i2} gives that there exists a positive constant $C$ such that
\[
w(x)\leq C\quad\quad\mbox{in} \ \ B_2\backslash B_{1/10}.
\]
For $z\in \partial B_1$, let
\[
g(x)=\int_{B_{1/r}\backslash B_{9/10}(z)}\frac{|y|^{\tau}w^p(y)}{|x-y|^{n-2\sigma}}dy.
\]
For $x_1$, $x_2\in B_{1/2}(z)$,
\begin{equation*}
\begin{split}
g(x_1)
&=\int_{B_{1/r}\backslash B_{9/10}(z)}\frac{|y|^{\tau}w^p(y)}{|x_1-y|^{n-2\sigma}}dy\\
&=\int_{B_{1/r}\backslash B_{9/10}(z)}\frac{|x_2-y|^{n-2\sigma}}{|x_1-y|^{n-2\sigma}}\frac{|y|^{\tau}w^p(y)}{|x_2-y|^{n-2\sigma}}dy\\
&\leq \left(\frac{7}{2}\right)^{n-2\sigma}\int_{B_{1/r}\backslash B_{9/10}(z)}\frac{|y|^{\tau}w^p(y)}{|x_2-y|^{n-2\sigma}}dy\\
&\leq \left(\frac{7}{2}\right)^{n-2\sigma}g(x_2).
\end{split}
\end{equation*}
Hence, $g$ satisfies the Harnack inequality in $B_{1/2}(z)$. Since $h\in C^1(\overline{B_1})$ is a positive function, there exist a constant $C_0\geq 1$ such that $\max_{\overline{B_{1/2}(z)}}v\leq C_0\min_{\overline{B_{1/2}(z)}}v$. On the other hand, we can write $w$ as
\[
w(x)=\int_{B_{9/10}(z)}\frac{|y|^{\tau}w^p(y)}{|x-y|^{n-2\sigma}}dy+g(x)+v(x)\quad\quad\mbox{in } \  B_{1/2}(z),
\]
then from Proposition 2.2 in \cite{JLX} we conclude that
\[
\sup_{B_{1/2}(z)}w\leq C\inf_{B_{1/2}(z)}w.
\]
A covering argument leads to
\[
\sup_{B_{3/2}\backslash B_{1/2}}w\leq C\inf_{B_{3/2}\backslash B_{1/2}}w.
\]
We complete the proof of Harnack inequality by rescaling back to $u$.

\end{proof}
\section{Asymptotical radial symmetry}\label{sec:asymptotical}
Last, we give a proof of the Theorem \ref{i3}  for completely.
\subsection{Proof of Theorem \ref{i3}}
\begin{proof}[Proof of Theorem \ref{i3}]
Assume that there exists some positive constant $\va\in (0,1)$ such that for all $0<\lda<|x|\le \va$, $y\in B_{3/2}\backslash (B_\lda(x)\cup\{0\})$,
\be\label{eq:small stop1}
u_{x,\lda}(y)\le u(y),
\ee
where
\[
u_{x,\lda }(y):= \left(\frac{\lda }{|y-x|}\right)^{n-2\sigma}u\left(x+\frac{\lda^2(y-x)}{|y-x|^2}\right).
\]
Let $r>0$ and $x_1$, $x_2\in \partial B_r$ be such that
$$u(x_1)=\max_{\partial B_r} u,~~~~u(x_2)=\min_{\partial B_r} u,$$
and define $$x_3:=x_1+\frac{\varepsilon(x_1-x_2)}{4|x_1-x_2|},\quad
\lambda:=\sqrt{\frac{\varepsilon}{4}\Big(|x_1-x_2|+\frac{\varepsilon}{4}\Big)}.$$
Then
\be\label{PP}
|x_3|=\left|x_1+\frac{\varepsilon(x_1-x_2)}{4|x_1-x_2|}\right|\leq r+\frac{\varepsilon}{4}.
\ee
Via some direct computations and   $|x_1|^2=|x_2|^2=r^2$, we find that
\begin{equation*}
\begin{split}
\lda^2-|x_3|^2&=\frac{\varepsilon}{4}\left(|x_1-x_2|+\frac{\varepsilon}{4}\right)
-\left|x_1+\frac{\varepsilon(x_1-x_2)}{4|x_1-x_2|}\right|^2\\
&=\frac{\varepsilon(|x_2|^2-|x_1|^2)}{4|x_1-x_2|}-x_1^2
=-x_1^2<0,
\end{split}
\end{equation*}
which follows from this and \eqref{PP} that $\lda<|x_3| <\varepsilon$ by choosing $r<\frac{3\varepsilon}{4}$.

It follows from \eqref{eq:small stop1} that
$$u_{x_3,\lda}(x_2)\le u(x_2).$$
Since
\[
x_2-x_3=x_2-x_1+\frac{\varepsilon(x_2-x_1)}{4|x_1-x_2|}
=\frac{x_2-x_1}{|x_1-x_2|}\left(|x_1-x_2|+\frac{\varepsilon}{4}\right),
\]
then
\[
|x_2-x_3|=|x_1-x_2|+\frac{\varepsilon}{4},
\]
\[
\frac{x_2-x_3}{|x_2-x_3|^2}=\frac{x_2-x_1}{|x_1-x_2|\left(|x_1-x_2|+\frac{\varepsilon}{4}\right)},
\]
and
\[
\frac{\lambda^2(x_2-x_3)}{|x_2-x_3|^2}=\frac{\va(x_2-x_1)}{4|x_1-x_2|}.
\]
Hence,
\begin{equation*}
\begin{split}
u_{x_3,\lda}(x_2)&=\left(\frac{\lda}{|x_2-x_3|}\right)^{n-2\sigma}u\left(x_3+\frac{\lambda^2(x_2-x_3)}{|x_2-x_3|^2}\right)\\
&=\left(\frac{\lda}{|x_1-x_2|+\frac{\varepsilon}{4}}\right)^{n-2\sigma}u\left(x_3+\frac{\va(x_2-x_1)}{4|x_1-x_2|}\right)\\
&= \left(\frac{\lda}{|x_1-x_2|+\frac{\varepsilon}{4}}\right)^{n-2\sigma}u(x_1).
\end{split}
\end{equation*}
On the other hand,
\begin{equation*}
\begin{split}
u_{x_3,\lda}(x_2)&= \left(\frac{\lda}{|x_1-x_2|+\frac{\varepsilon}{4}}\right)^{n-2\sigma}u(x_1)
=\frac{u(x_1)}{\left(\frac{4|x_1-x_2|}{\varepsilon}+1\right)^{\frac{n-2\sigma}{2}}}
\geq\frac{u(x_1)}{\left(\frac{8r}{\varepsilon}+1\right)^{\frac{n-2\sigma}{2}}},
\end{split}
\end{equation*}
then$$u(x_1)\leq\left(\frac{8r}{\varepsilon}+1\right)^{\frac{n-2\sigma}{2}}u_{x_3,\lda}(x_2)\leq \left(1+Cr\right)^{\frac{n-2\sigma}{2}}u(x_2),$$
for some $C=C(\va)$. That is,
$$\max_{\partial B_r} u\leq (1+Cr) \min\limits_{\partial B_r} u.$$
Hence for any $x\in \partial B_r$,
$$\frac{u(x)}{\bar u(|x|)}-1\leq \frac{\max_{\partial B_r} u}{\min_{\partial B_r} u}-1\leq Cr,$$
$$\frac{u(x)}{\bar u(|x|)}-1\geq \frac{\min_{\partial B_r} u}{\max_{\partial B_r} u}-1\geq \frac{1}{1+Cr}-1> -Cr,$$
In conclusion, we have
$$\left|\frac{u(x)}{\bar u(|x|)}-1\right|\leq Cr.$$
It follows that
\[
u(x)=\bar u(|x|)(1+O(r))\quad\mbox{as }\ x\rightarrow0.
\]
Therefore, in order to complete the proof of Theorem \ref{i3}, it suffices to prove \eqref{eq:small stop1}.
\end{proof}
\subsection{The proof of \eqref{eq:small stop1}}
Replacing $u(x)$ by $r^{\frac{2\sigma+\tau}{p-1}}u(rx)$ and $h(x)$ by $r^{\frac{2\sigma+\tau}{p-1}}h(rx)$ for $r=\frac{2}{3}$, we can  consider the equation \eqref{i1} in $B_{3/2}$ for convenience, namely,
\be\label{i11}
u(y)=\int_{B_{2/3} }\frac{|z|^{\tau}u^p(z)}{|y-z|^{n-2\sigma}}dz+h(y)\quad\quad\mbox{in }\ \ B_{3/2} \backslash\{0\},
\ee
with $h\in C^1(\overline{B_{3/2}})$ is positive and $|\nabla \ln h|\leq C$ in $\overline{B_{3/2}}$.
Moreover, if we extend $u$ to be identically $0$ outside $B_{3/2}$, then \eqref{i11} can be written as
\[
u(y)=\int_{\R^n}\frac{|z|^{\tau}u^p(z)}{|y-z|^{n-2\sigma}}dz+h(y)\quad\quad\mbox{in }\ \ B_{3/2} \backslash\{0\}.
\]
For all $0<|x|<\frac{1}{16}$ and $\lambda >0$, it is a straightforward computation to show that
\begin{equation*}
u_{x,\lda }(y)=
\int_{\R^n}\left(\frac{\lambda}{|z-x|}\right)^{p^*}\frac{\left|z_{x,\lambda}\right|^{\tau}u_{x,\lambda}^p(z)}{|y-z|^{n-2\sigma}}dz+h_{x,\lambda}(y)\quad\quad\mbox{in }\ \ B^{x,\lambda}_{3/2},
\end{equation*}
where $z_{x,\lambda}:=x+\frac{\lambda^2(z-x)}{|z-x|^2}$, $p^*:=n+2\sigma-p(n-2\sigma)$, $B^{x,\lambda}_{3/2}:=\left\{y_{x,\lambda},y\in B_{3/2}\right\}$.
It follows that
\begin{equation*}
\begin{split}
u(y)-u_{x,\lda }(y)=&\int_{|z-x|\geq \lambda}K(x,\lambda;y,z)\left(|z|^{\tau}u^p(z)-\left(\frac{\lambda}{|z-x|}\right)^{p^*}\left|z_{x,\lambda}\right|^{\tau}u_{x,\lambda}^p(z)\right)\\
&+h(y)-h_{x,\lda }(y),
\end{split}
\end{equation*}
where
\[
K(x,\lambda;y,z):=\frac{1}{|y-z|^{n-2\sigma}}-\left(\frac{\lambda}{|y-x|}\right)^{n-2\sigma}\frac{1}{|y_{x,\lambda}-z|^{n-2\sigma}}.
\]
On the other hand, since $h\in C^1(\overline{B_{3/2}})$ is positive and $|\nabla \ln h|\leq C$ in $B_{3/2}$, then by \cite[Lemma 3.1]{JX}, there exists $r_0\in (0,1/2)$ depending only on $n$, $\sigma$ and $C$ such that for every $x\in B_1$ and $0<\lambda\leq r_0$, there holds
\be\label{j3}
h_{x,\lda }(y)\leq h(y) \quad\quad\mbox{in} \ \ B_{3/2}.
\ee
The aim is to show that there exists some positive constant $\va\in (0,r_0)$ such that for $|x|\le \va$, $\lda\in(0,|x|)$,
\be\label{eq:small stop}
u_{x,\lda}(y)\le u(y) \quad \mbox{in }\  B_{3/2}\backslash (B_\lda(x)\cup\{0\}),
\ee
that is \eqref{eq:small stop1}.

\subsection{The proof of \eqref{eq:small stop}}
To prove \eqref{eq:small stop}, for fixed $x\in B_{1/16}\backslash \{0\}$, we first define
\[
\bar \lda(x):=\sup \left\{0<\mu\le |x|\ \big|\ u_{x,\lda}(y)\leq u(y)\ \mbox{in }\ B_{3/2}\backslash (B_\lda(x)\cup\{0\}),~\forall~ 0<\lda <\mu\right\},
\]
and then show $\bar \lda(x)=|x|$.

For sake of clarity, the proof of  \eqref{eq:small stop} is divided into  three steps.  For the first step, we need the following Claim 1 to make sure that $\bar \lda(x)$ is well defined.

\textbf{Claim 1}: There exists $\lda_0(x)<|x|$ such that for all $\lda\in(0,\lda_0(x))$,
$$ u_{x,\lda}(y)\leq u(y) \quad\mbox{ in }\ B_{3/2}\backslash (B_\lda(x)\cup\{0\}).$$
Second, we give that

\textbf{Claim 2}: There exists a positive constant $\va\in(0,r_0)$ sufficiently small such that for all $|x|\le \va$, $\lda\in(0,|x|)$,
\[
u_{x,\lda}(y)<u(y) \quad \mbox{ in }\  B_{3/2}\backslash B_{1/4}.
\]
Last,  we are going to prove that

\textbf{Claim 3}:  $$\bar \lda(x)=|x|.$$

\begin{proof}[Proof of Claim 1] First of all, we are going to show that there exist $\mu$ and $\lambda_0(x)$ satisfying $0<\lambda_0(x)<\mu<|x|$ such that for all $\lda \in (0,\lambda_0(x))$,
\be\label{Q}
u_{x,\lda }(y)\leq u(y) \quad \mbox{ in } \ \overline{B_\mu(x)}\backslash B_\lda(x).
\ee
Then we will prove that for all $\lda \in (0,\lda_0(x))$,
\be\label{N}
u_{x,\lambda}(y)\leq u(y) \quad \mbox{ in }\ B_{3/2}\backslash\left(\overline{B_\mu(x)}\cup\{0\}\right).
\ee
Indeed, for every $0<\lda<\mu<\frac{1}{2}|x|$, we have
\[
|\nabla \ln u|\leq C_0 \quad\quad\mbox{in} \  \overline{B_{|x|/2}(x)}.
\]
Then for all $0<r<\mu:=\min\left\{\frac{|x|}{4}, \frac{n-2\sigma}{2C_0}\right\}$,  $\theta\in S^{n-1}$,
\begin{equation*}
\begin{split}
\frac{d}{dr}\left(r^{\frac{n-2\sigma}{2}}u(x+r\theta)\right)
&=r^{\frac{n-2\sigma}{2}-1}u(x+r\theta)\left(\frac{n-2\sigma}{2}-r\frac{\nabla u\cdot \theta}{u}\right)\\
&\geq r^{\frac{n-2\sigma}{2}-1}u(x+r\theta)\left(\frac{n-2\sigma}{2}-C_0r\right)
>0.
\end{split}
\end{equation*}
For any $y\in B_{\mu}(x)$, $0<\lambda<|y-x|\leq \mu$, let
\[
\theta=\frac{y-x}{|y-x|},\quad r_1=|y-x|,\quad r_2=\frac{\lambda^2}{|y-x|^2}r_1.
\]
It follows that
\[
r_2^{\frac{n-2\sigma}{2}}u(x+r_2\theta)<r_1^{\frac{n-2\sigma}{2}}u(x+r_1\theta).
\]
That is \eqref{Q}. By equation \eqref{i1},  we have
\be\label{i4}
u(x)\geq 4^{2\sigma-n}\int_{B_{3/2}}|y|^{\tau}u^p(y)dy=:C_1>0,
\ee
and thus we can find $0<\lambda_0(x)\ll\mu$ such that, for every $\lambda\in(0,\lambda_0(x))$,
\[
u_{x,\lambda}(y)\leq u(y) \quad \mbox{ in }\ B_{3/2}\backslash\left(\overline{B_\mu(x)}\cup\{0\}\right),
\]
that is \eqref{N}.
\end{proof}

\begin{proof}[Proof of Claim 2] For $\frac{1}{4}\le |y|\le \frac{3}{2}$ and $0<\lda<|x|<\frac 18$, we have
\[
|y-x|\geq |y|-|x|\geq \frac 18>|x|.
\]
Hence
\[
\left|x+\frac{\lda^2(y-x)}{|y-x|^2}\right|\le |x|+\frac{|x|^2}{|y-x|}\le 2|x|,
\]
and
\[
\left|x+\frac{\lda^2(y-x)}{|y-x|^2}\right|\geq |x|- \frac{|x|^2}{|y-x|}\geq  \frac{|x|}{2}.
\]
It follows from Theorem \ref{i2} that
$$u\left(x+\frac{\lda^2(y-x)}{|y-x|^2}\right)\leq  C|x|^{-\frac{2\sigma+\tau}{p-1}},$$
Thus, for $0<\lda<|x|<\frac{1}{8},\ \frac{1}{4}\le|y|\le \frac{3}{2}$, we conclude that
\begin{equation}\label{i5}
\begin{split}
u_{x,\lda}(y)&\leq \left(\frac{\lda}{|y-x|}\right)^{n-2\sigma}C|x|^{-\frac{2\sigma+\tau}{p-1}}\\
&\leq C\lda^{n-2\sigma}|x|^{-\frac{2\sigma+\tau}{p-1}}\\
&\leq C|x|^{\frac{p(n-2\sigma)-n-\tau}{p-1}}\leq C|\varepsilon|^{\frac{p(n-2\sigma)-n-\tau}{p-1}}.
\end{split}
\end{equation}
Since $\frac{n+\tau}{n-2\sigma}<p\leq\frac{n+2\sigma+2\tau}{n-2\sigma}$, we have $\frac{p(n-2\sigma)-n-\tau}{p-1}>0$. Then by \eqref{i4}, $\va>0$ can be chosen sufficiently small to guarantee that for all $0<\lda<|x|\le \va<r_0$ and $\frac{1}{4}\le |y|\le \frac{3}{2}$,
\be\label{li}
u_{x,\lda}(y)\le C|x|^{\frac{p(n-2\sigma)-n-\tau}{p-1}}<u(y).
\ee
\end{proof}
\begin{proof}[Proof of Claim 3]
We prove Claim 3 by contradiction. Assume $\bar \lda(x)<|x|\le \va<r_0$ for some $x\neq 0$. We want to show that there exists a positive constant $\widetilde{\va}\in \left(0,\frac{|x|-\bar\lda(x)}{2}\right)$ such that for $\lda\in(\bar\lda(x),\bar\lda(x)+\widetilde{\va})$,
\be\label{H}
u_{x,\lda}(y)\leq u(y) \quad \mbox{ in }\ B_{3/2}\backslash (B_\lda(x)\cup\{0\}),
\ee
which contradicts  the definition of $\bar\lda(x)$, then we obtain $\bar\lda(x)=|x|$.

By the Claim 2, it is obviously to obtain that  \eqref{H} in $B_{3/2}\backslash B_{1/4}$. Next, we need to consider the region $B_{1/4}\backslash (B_\lda(x)\cup\{0\})$.

It is a straightforward computation to show that for every $\bar \lambda(x)\leq\lambda< |x|\leq r_0$,
\begin{equation*}
\begin{split}
u(y)-u_{x,\lda }(y)\geq&\int_{B_{1/2}\backslash B_{\lambda}(x)}K(x,\lambda;y,z)\left(|z|^{\tau}u^p(z)-\left(\frac{\lambda}{|z-x|}\right)^{p^*}\left|z_{x,\lambda}\right|^{\tau}u_{x,\lambda}^p(z)\right)\\
&+J(x,\lambda,u,y),
\end{split}
\end{equation*}
where \eqref{j3} is used in the above inequality and
\begin{equation*}
\begin{split}
J(x,\lambda,u,y):=&\int_{B_{3/2}\backslash B_{1/2}}K(x,\lambda;y,z)\left(|z|^{\tau}u^p(z)-\left(\frac{\lambda}{|z-x|}\right)^{p^*}\left|z_{x,\lambda}\right|^{\tau}u_{x,\lambda}^p(z)\right)dz\\
&-\int_{B_{3/2}^c}K(x,\lambda;y,z)\left(\frac{\lambda}{|z-x|}\right)^{p^*}\left|z_{x,\lambda}\right|^{\tau}u_{x,\lambda}^p(z)dz.
\end{split}
\end{equation*}
It follows that \cite[Proposition 1.3]{BL},
\begin{equation*}
\begin{split}
J(x,\lambda,u,y)
\geq&\int_{B_{3/2}\backslash B_{1/2}}K(x,\lambda;y,z)|z|^{\tau}\left(u^p(z)-u_{x,\lambda}^p(z)\right)dz\\
&-\int_{B_{3/2}^c}K(x,\lambda;y,z)|z|^{\tau}u_{x,\lambda}^p(z)dz.
\end{split}
\end{equation*}
By \eqref{i4} and \eqref{i5}, we have
\begin{equation*}
\begin{split}
J(x,\lambda,u,y)\geq&\left(\frac{3}{2}\right)^{\tau}\int_{B_{3/2}\backslash B_{1/2}}K(x,\lambda;y,z)\left(C_1^p-\left(C|\varepsilon|^{\frac{p(n-2\sigma)-n-\tau}{p-1}}\right)^p\right)dz\\
&-\left(\frac{3}{2}\right)^{\tau}\int_{B_{3/2}^c}K(x,\lambda;y,z)\left(\left(\frac{|x|}{|z-x|}\right)^{n-2\sigma}|x|^{-\frac{2\sigma+\tau}{p-1}}\right)^pdz.
\end{split}
\end{equation*}
Since $\frac{n+\tau}{n-2\sigma}<p\leq\frac{n+2\sigma+2\tau}{n-2\sigma}$, we have $\frac{p(n-2\sigma)-n-\tau}{p-1}>0$. Then  $\va>0$ can be chosen sufficiently small to guarantee that
\begin{equation*}
\begin{split}
J(x,\lambda,u,y)
\geq&\frac{C_1^p}{2}\left(\frac{3}{2}\right)^{\tau}\int_{B_{3/2}\backslash B_{1/2}}K(x,\lambda;y,z)dz\\
&-\left(\frac{3}{2}\right)^{\tau}|\varepsilon|^{\frac{p(n-2\sigma)-n-\tau}{p-1}}\int_{B_{3/2}^c}K(x,\lambda;y,z)\frac{1}{|z-x|^{p(n-2\sigma)}}dz\\
\geq&\frac{C_1^p}{2}\left(\frac{3}{2}\right)^{\tau}\int_{B_{23/16\backslash 9/16}}K(0,\lambda;y-x,z)dz\\
&-\left(\frac{3}{2}\right)^{\tau}\left(\frac{16}{7}\right)^{p(n-2\sigma)}|\varepsilon|^{\frac{p(n-2\sigma)-n-\tau}{p-1}}\int_{B_{23/16}^c}K(0,\lambda;y-x,z)dz.
\end{split}
\end{equation*}
Indeed, since  for $|y-x|=\lambda<\frac{1}{16}$,
\[
K(0,\lambda;y-x,z)=0,
\]
and for $|z|\geq\frac{3}{8}$, $|y-x|=\lambda$,
\[
(y-x)\cdot \nabla_y K(0,\lambda;y-x,z)=(n-2\sigma)|y-x|^{2\sigma-n-2}(|z|^2-|y-x|^2)>0.
\]
Using the positive and smoothness of $K$, we have
\be\label{i6}
\frac{\delta_1(|y-x|-\lambda)}{|y-x-z|^{n-2\sigma}}\leq  K(0,\lambda;y-x,z)\leq \frac{\delta_2(|y-x|-\lambda)}{|y-x-z|^{n-2\sigma}},
\ee
for $\bar \lambda(x)\leq\lambda\leq |y-x|\leq |x|+\frac{1}{4}<\frac{5}{16}$, $\frac{3}{8}\leq|z|\leq M<+\infty$, where $M$ and $0<\delta_1<\delta_2<+\infty$ are positive constants. If $M$ is large enough, then
\[
0<c_2\leq (y-x)\cdot \nabla_y (|y-x|^{n-2\sigma}K(0,\lambda;y-x,z))\leq c_3<+\infty.
\]
Thus, \eqref{i6} holds for $|z|\geq M$, $\bar \lambda(x)\leq\lambda\leq |y-x|\leq |x|+\frac{1}{4}$.

With the help of it, for $y\in B_{1/4}\backslash(B_\lambda(x)\cup\{0\})$, there exists positive constants $C_2$ and $C_3$ such that
\begin{equation*}
\begin{split}
J(x,\lambda,u,y)
\geq&\frac{C_1}{2}\left(\frac{3}{2}\right)^{\tau}\int_{B_{23/16\backslash 9/16}}\frac{\delta_1(|y-x|-\lambda)}{|y-x-z|^{n-2\sigma}}dz\\
&-\left(\frac{3}{2}\right)^{\tau}\left(\frac{16}{7}\right)^{p(n-2\sigma)}|\varepsilon|^{\frac{p(n-2\sigma)-n-\tau}{p-1}}\int_{B_{23/16}^c}\frac{\delta_2(|y-x|-\lambda)}{|y-x-z|^{n-2\sigma}}dz\\
\geq& C_2(|y-x|-\lambda)-C_3(|y-x|-\lambda)|\varepsilon|^{\frac{p(n-2\sigma)-n-\tau}{p-1}}.
\end{split}
\end{equation*}
For $\varepsilon$ sufficiently small, we have
\begin{equation*}
J(x,\lambda,u,y)
\geq \frac{C_2}{2}(|y-x|-\lambda).
\end{equation*}
It follows that we can choose $\widetilde{\va}\in \left(0,\frac{|x|-\bar\lda(x)}{2}\right)$ such that for every $\bar \lambda(x)\leq\lambda\leq \bar\lambda(x)+\widetilde{\va}$, and $y\in B_{1/4}\backslash (B_\lambda(x)\cup \{0\})$,
\begin{equation*}
\begin{split}
u(y)-u_{x,\lda }(y)&\geq\int_{B_{1/2}\backslash B_{\lambda}(x)}K(x,\lambda;y,z)\left(|z|^{\tau}u^p(z)-\left(\frac{\lambda}{|z-x|}\right)^{p^*}\left|z_{x,\lambda}\right|^{\tau}u_{x,\lambda}^p(z)\right)dz\\
&\geq \int_{B_{1/2}\backslash B_{\lambda}(x)}K(x,\lambda;y,z)|z|^{\tau}\left(u^p(z)-u_{x,\lambda}^p(z)\right)dz.
\end{split}
\end{equation*}
So Claim 2 gives that
\begin{equation*}
\begin{split}
u(y)-u_{x,\lda }(y)
\geq& \int_{B_{1/4}\backslash B_{\lambda}(x)}K(x,\lambda;y,z)|z|^{\tau}\left(u^p(z)-u_{x,\lambda}^p(z)\right)dz\\
&+ \int_{B_{1/2}\backslash B_{5/16}}K(x,\lambda;y,z)|z|^{\tau}\left(u^p(z)-u_{x,\lambda}^p(z)\right)dz\\
\geq& \int_{B_{1/4}\backslash B_{\lambda}(x)}K(x,\lambda;y,z)|z|^{\tau}\left(u_{x,\bar\lambda(x)}^p(z)-u_{x,\lambda}^p(z)\right)dz\\
&+ 2^{\tau}\int_{B_{1/2}\backslash B_{5/16}}K(x,\lambda;y,z)\left(u^p(z)-u_{x,\lambda}^p(z)\right)dz\\
\geq& -4^{-\tau}\int_{B_{1/4}\backslash B_{\lambda}(x)}K(x,\lambda;y,z)\left|u_{x,\bar\lambda(x)}^p(z)-u_{x,\lambda}^p(z)\right|dz\\
&+ 2^{\tau}\int_{B_{1/2}\backslash B_{5/16}}K(x,\lambda;y,z)\left(u^p(z)-u_{x,\lambda}^p(z)\right)dz.
\end{split}
\end{equation*}
Since $\|u\|_{C(B_{\bar\lambda(x)+\widetilde{\va}}(x))}\leq C$, it follows that there exists some constant $C>0$ such that for any $\bar\lambda(x)\leq\lambda\leq\bar\lambda(x)+\widetilde{\va}$, $z\in B_{1/4}\backslash B_{\lambda}(x)$,
\[
|u_{x,\bar\lambda}^p(z)-u_{x,\lambda}^p(z)|\leq C(\lambda-\bar\lambda(x))\leq C\widetilde{\va}.
\]
Moreover, for $z\in \overline{B_{1/2}}\backslash B_{5/16}$, there exists some constant $C_1>0$ such that
\[
u^p(z)-u_{x,\lambda}^p(z)\geq C_1.
\]
Hence, we have
\begin{equation*}
\begin{split}
u(y)-u_{x,\lda}(y)
&\geq -C\widetilde{\va}\int_{B_{1/4}\backslash B_{\lambda}(x)}K(x,\lambda;y,z)dz+ C_1\int_{B_{1/2}\backslash B_{5/16}}K(x,\lambda;y,z)dz\\
&\geq -C\widetilde{\va}\int_{B_{1/4}\backslash B_{\lambda}(x)}K(x,\lambda;y,z)dz+ C_1\int_{B_{7/16}\backslash B_{3/8}}K(0,\lambda;y-x,z)dz.
\end{split}
\end{equation*}
On the other hand, since
\begin{equation*}
\begin{split}
\int_{B_{1/4}\backslash B_{\lambda}(x)}K(x,\lambda;y,z)dz&\leq
\int_{B_{5/16}\backslash B_{\lambda}}K(0,\lambda;y-x,z)dz\\
&\leq C(|y-x|-\lambda),
\end{split}
\end{equation*}
and
\[
\int_{B_{7/16}\backslash B_{3/8}}K(0,\lambda;y-x,z)dz\geq \frac{\delta_1(|y-x|-\lambda)}{|y-x-z|^{n-2\sigma}}.
\]
Then we can choose $\widetilde{\va}$ sufficient small such that for $\bar\lambda(x)\leq \lambda\leq \bar\lambda(x)+\widetilde{\va}$,
\[
u_{x,\lda}(y)\leq u(y)\quad\quad\mbox{in } \ \ B_{1/4}\backslash (B_\lambda(x)\cup\{0\}).
\]
Combining  Claim 2, we get a contradiction and then we finish the proof.
\end{proof}

{\bf Acknowledgements}
We would like to express our deep thanks to Professor Jiguang Bao and Professor  Jingang Xiong for useful discussions on the subject of this paper.

\medskip

{}
\bigskip

\medskip

\noindent Y. Li

\noindent Beijing Jiaotong University,\\
 Beijing 100044, People's Republic of China\\[1mm]
\noindent {\it E-mails}: \texttt{lyimei@bjtu.edu.cn}
%
%

\end{document}